\newtheorem{theorem}{Theorem}[section]
\newtheorem*{theorem*}{Theorem}
\newtheorem*{mainthm}{Main Theorem}
\newtheorem{corollary}[theorem]{Corollary}
\newtheorem*{corollary*}{Corollary}
\newtheorem{lemma}[theorem]{Lemma}
\newtheorem{proposition}[theorem]{Proposition}
\theoremstyle{definition}
\newtheorem{example}[theorem]{Example}
\newtheorem*{example*}{Example}
\newtheorem*{examples*}{Examples}
\newtheorem{remark}[theorem]{Remark}
\newtheorem{definition}[theorem]{Definition}
\newsavebox{\eqbox}
\newenvironment{longequation*} {\begin{lrbox}{\eqbox}$} {$\end{lrbox}\begin{equation*}\resizebox{\linewidth}{!}{\ensuremath{\displaystyle\usebox{\eqbox}}}\end{equation*}}
\newcommand*\bigcdot{\mathpalette\bigcdot@{.5}}
\newcommand*\bigcdot@[2]{\mathbin{\vcenter{\hbox{\scalebox{#2}{\(\m@th#1\bullet\)}}}}}
\newcommand{\tikznode}[3][inner sep=0pt]{\tikz[remember picture,baseline=(#2.base)]{\node(#2)[#1]{\(#3\)};}}
\def\bydef{\coloneqq}
\def\og{{\fontencoding{T1}\selectfont\char19}}
\def\fg{{\fontencoding{T1}\selectfont\char20}\xspace}
\DeclarePairedDelimiter{\abs}{\lvert}{\rvert}
\DeclarePairedDelimiter{\Set}{\lbrace}{\rbrace}
\newcommand{\suchthat}{\hskip.5ex\color{black!66}\middle/\color{black}\hskip1ex}
\DeclareMathOperator{\Span}{Span}
\DeclareMathOperator{\diff}{d}
\DeclareMathOperator{\Flag}{Flag}
\DeclareMathOperator{\Grass}{Grass}
\DeclareMathOperator{\rank}{rank}
\DeclareMathOperator{\codim}{codim}
\DeclareMathOperator{\Bs}{Bs}
\DeclareMathOperator{\Exc}{Exc}
\DeclareMathOperator{\pr}{pr}
\DeclareMathOperator{\Id}{Id}
\let\S\relax 
\DeclareMathOperator{\S}{S}
\DeclareMathOperator{\Ext}{\mathchoice
  {\scalebox{1.2}{$\mathsf{\Lambda}$}}
  {\scalebox{1.1}{$\mathsf{\Lambda}$}}
  {\scalebox{0.8}{$\mathsf{\Lambda}$}}
  {\scalebox{.65}{$\mathsf{\Lambda}$}}
}
\newcommand{\C}{\mathbb{C}}
\renewcommand{\L}{\mathcal{L}} 
\renewcommand{\O}{\mathcal{O}} 
\renewcommand{\P}{\mathbf{P}} 
\newcommand{\R}{\mathbb{R}}
\newcommand{\N}{\mathbb{N}}
\newcommand{\B}{\mathbb{B}}
\newcommand{\kk}{\mathbf{k}}
\let\mathbi\boldsymbol
\renewcommand{\aa}{\mathbi{a}} 
\newcommand{\GG}{\mathbi{G}}
\newcommand{\FF}{\mathbi{F}}
\newcommand{\X}{\mathcal{X}}
\author{Antoine Etesse}
\email{antoine.etesse@univ-amu.fr}
\address{Aix Marseille Univ, CNRS, Centrale Marseille, I2M, Marseille, France}
\title
[Ampleness of Schur powers of cotangent bundles and \(k\)-hyperbolicity]
{Ampleness of Schur powers of cotangent bundles\\and \(k\)-hyperbolicity}
\subjclass{14M10, 14M12, 32Q45}
\keywords{ampleness, cotangent bundle, complete intersection, flag manifolds, hyperbolicity}
\begin{document}

\begin{abstract}
In this paper, we study a variation of a conjecture of Debarre on positivity of
cotangent bundles of complete intersections. We establish the ampleness of
Schur powers of cotangent bundles of generic complete intersections in
projective manifolds, with high enough explicit codimension and
multi-degrees. Our approach is naturally formulated in terms of flag
bundles and allows one to reach the optimal codimension. On complex
manifolds, this ampleness property implies intermediate hyperbolic
properties. We give a natural application of our main result in this
context.
\end{abstract}

\maketitle

\section*{Introduction}
In~\cite{Deb2005}, Debarre proved that complete intersections of sufficiently ample general hypersurfaces in complex Abelian varieties whose codimensions are at least as large as their dimensions have ample cotangent bundles.
By analogy, he suggested that this result should also hold when Abelian varieties are replaced by complex projective spaces.
This has been recently proved by Xie~\cite{Xie2018} and Brotbek--Darondeau~\cite{BD2018}, independently, based on ideas and explicit methods developped in~\cite{Brotbek2016}.
It is well-known that complex compact manifolds with ample cotangent bundles are complex hyperbolic in the sense of Kobayashi (cf.~\cite{kob13}).

More generally, for \(k \geq 1\), the notion of \textsl{\(k\)-infinitesimal hyperbolicity} (See Sect.~\ref{se: hyperbolicity}) generalizes the usual notion of hyperbolicity. Indeed, Kobayashi hyperbolicity is in this situation equivalent to \(1\)-infinitesimal hyperbolicity, and \(1\)-infinitesimal hyperbolicity implies \(k\)-infinitesimal hyperbolicity, \(k\geq 1\). Thus, the ampleness of the cotangent bundle implies \(k\)-infinitesimal hyperbolicity. However, to obtain this weaker property, it is sufficient to assume the ampleness of the \(k\)th exterior power of the cotangent bundle (\cite[Prop. 3.4]{Santa}).

\subsubsection*{Main new results}
Exterior powers are examples of \textsl{Schur powers}. These are obtained by applying to vector bundles certain \textsl{Schur functors} associated to partitions 
\[
  \lambda
  =
  (\lambda_{1}\geq\dotsb\geq\lambda_{k}>0)
\]
(see Sect.~\ref{se:definition} for more details).
In this work, we extend and we generalize the results of~\cite{BD2018} to such general Schur powers.
Namely, we establish the following ampleness result.
\begin{mainthm}
  Let \(M\) be a projective variety of dimension \(N\) over an algebraically closed field \(\kk\) of characteristic zero, equipped with a fixed very ample line bundle \(\O_{M}(1)\).
  Let \(c \leq N\) be a positive integer.

  For any partition \(\lambda=(\lambda_{1} \geq \dotsb \geq \lambda_{k} >0)\) with \(k\) parts, such that \((1+k)c \geq N\),
  the \(\lambda\)th Schur power of the cotangent bundle of a complete intersection
  \[
    X
    \bydef 
    H_{1}\cap \dotsb \cap H_{c}
  \]
  of \(c\) generic hypersurfaces \(H_{i}\in |\O_{M}(d_{i})|\) with respective degrees \(d_{i}\) such that
  \[
    d_{i}
    \geq
    \left(
      1+
      2c \,
      \frac{2\lambda_{1}+\lambda_{2}+ \dotsb + \lambda_{k}}{\gcd(\lambda_{1}, \dotsc, \lambda_{k})}
      \bigl(N+k(N-k)\big)^{(1+k)c}
    \right)^{2}
  \]
  is ample.
\end{mainthm}
\begin{examples*}\mbox{}
  \begin{enumerate}
    \item For \(k=1\) (symmetric powers), one recovers Xie--Brotbek--Darondeau's theorem.

    \item For \(\lambda=(1,1)\) and \(N=5\), a complete intersection \(X\) of two generic hypersurfaces of degrees \(\geq 6.10^{16}\) has \(\Ext^{2}\Omega_{X}\) ample, whereas it cannot have \(\Omega_{X}\) ample (since \(2c=4 < 5\)).
  \end{enumerate}
\end{examples*}

Qualitatively, this statement implies that the \(\lambda\)th Schur power of the cotangent bundle of a complete intersection of at least \(N/(1+k)\) generic hypersurfaces with large enough degrees will always be ample. This codimension condition is optimal. 

Indeed, let us recall the following vanishing theorem of Brückmann and Rackwitz for \(M=\P^{N}\). Recall that for a partition \(\lambda\), the \textsl{conjugate} partition \(\lambda^{*}\) is defined by \(\lambda_{j}^{*} \bydef \#\Set*{i\suchthat\lambda_{i} \geq j}\).
\begin{theorem*}[\cite{BR}]
  Let \(X\) be a smooth complete intersection of codimension \(c\) in \(\P^{N}\). Let \(\lambda\) be a partition such that \((\lambda_{1}^{*}+ \dotsb + \lambda_{c}^{*})<(N-c)\). Then one has the following vanishing result:
  \[
    H^{0}(X,\S^{\lambda}\Omega_{X})
    =
    \Set{0}.
  \]
\end{theorem*}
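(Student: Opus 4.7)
\emph{Approach.} I would proceed by induction on the codimension $c$, strengthening the statement to a suitable range of twisted cohomology groups $H^q(X, S^\lambda \Omega_X(m))$ so that each inductive step can close. The base case $c=0$ is $X = \P^N$: the hypothesis reduces to $0<N$ and the required vanishing is an instance of Bott's formula for Schur powers on projective space, which in fact supplies the full package of twisted vanishings that will fuel the induction.

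\emph{Inductive step.} Realize $X$ as a hypersurface of degree $d$ inside a smooth complete intersection $Y$ of codimension $c-1$ in $\P^N$, to which the induction applies (since $\lambda_1^* + \dotsb + \lambda_c^* < N-c$ implies a fortiori $\lambda_1^* + \dotsb + \lambda_{c-1}^* < N-(c-1)$). The conormal exact sequence
\[
0 \to \O_X(-d) \to \Omega_Y|_X \to \Omega_X \to 0
\]
has line-bundle kernel, so Pieri's rule equips $S^\lambda \Omega_Y|_X$ with a natural filtration whose graded pieces are $S^\mu \Omega_X \otimes \O_X(-d(|\lambda|-|\mu|))$, indexed by partitions $\mu \subseteq \lambda$ for which $\lambda/\mu$ is a horizontal strip. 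The outermost quotient ($\mu = \lambda$) is precisely $S^\lambda \Omega_X$; let $\mathcal{F}$ denote the kernel of the induced surjection $S^\lambda \Omega_Y|_X \twoheadrightarrow S^\lambda \Omega_X$, so $\mathcal{F}$ inherits a filtration by the remaining pieces ($\mu \subsetneq \lambda$).

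The long exact sequence then reduces $H^0(X, S^\lambda \Omega_X) = 0$ to the pair of vanishings $H^0(X, S^\lambda \Omega_Y|_X) = 0$ and $H^1(X, \mathcal{F}) = 0$. The first is accessed via the restriction sequence $0 \to S^\lambda \Omega_Y(-d) \to S^\lambda \Omega_Y \to S^\lambda \Omega_Y|_X \to 0$ on $Y$, killing $H^0(Y, S^\lambda \Omega_Y)$ by the induction and $H^1(Y, S^\lambda \Omega_Y(-d))$ by its twisted strengthening. The second reduces, along the filtration of $\mathcal{F}$, to the $H^1$-vanishing of each $S^\mu \Omega_X \otimes \O_X(-d(|\lambda|-|\mu|))$ for $\mu \subsetneq \lambda$, again handled by the twisted induction at codimension $c$ but applied to the strictly smaller partition $\mu$.

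\emph{Main obstacle.} The delicate work lies in formulating the enriched inductive statement with the precise range of cohomological degrees and $\O(m)$-twists so that every side condition encountered at each conormal step remains inside the range dictated by Bott's formula on $\P^N$. The numerical bound $\lambda_1^* + \dotsb + \lambda_c^* < N - c$ is the sharp threshold I would expect to emerge from this bookkeeping: each conormal extraction effectively removes one column from the diagram of $\lambda$, and the $c$-fold accumulated trimming is exactly what the hypothesis on the first $c$ parts of the conjugate partition records.
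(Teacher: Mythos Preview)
The paper does not contain a proof of this statement: it is quoted in the introduction as a result of Br\"uckmann and Rackwitz \cite{BR}, and invoked only to argue that the codimension condition $(k+1)c \geq N$ in the main theorem is sharp. There is therefore nothing in the paper to compare your proposal against.

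That said, your outline is the standard route to vanishing theorems of this type and is essentially how the original Br\"uckmann--Rackwitz argument proceeds: induction on the codimension, the conormal sequence for a hypersurface section, the Pieri filtration on Schur powers with line-bundle kernel, and Bott's formula on $\P^N$ as the seed. Your identification of the main difficulty is also accurate: the induction only closes once the statement is strengthened to a vanishing of $H^q(X, S^\lambda \Omega_X(m))$ over an explicit range of $(q,m)$, and the bookkeeping that pins down this range (and shows it matches the hypothesis $\lambda_1^* + \dotsb + \lambda_c^* < N-c$) is where all the work lies. As written, your sketch names this obstacle rather than resolving it, so it is a plausible plan rather than a proof; but nothing in it is wrong.
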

Now, assume \(\lambda\) satisfies \((1+k)c<N\). On the one hand, when \(m\geq c\), since \(k=\lambda_{1}^{*}\), the condition of the previous theorem for the partition \(m\lambda\) becomes
\[
  \underbrace{k+\dotsb+k}_{\times c}
  <
  N-c,
\]
which is satisfied.
Therefore \(H^{0}(X,\S^{m\lambda}\Omega_{X})=\Set{0}\) for \(m\geq c\). But on the other hand, the ampleness of \(\S^{\lambda}\Omega_{X}\) would imply that \(\S^{m\lambda}\Omega_{X}\) is globally generated for \(m\) large enough, by Proposition~\ref{prop:LN} below. Accordingly, \(\S^{\lambda}\Omega_{X}\) cannot be ample.

As annonced, the particular case of exterior powers has the following interesting consequence in hyperbolicity.
\begin{corollary*}
  Let \(M\) be a complex projective variety of dimension \(N\), equipped with a very ample line bundle \(\O_{M}(1)\).
  The complete intersection of  \(c\geq N/(1+k)\) generic hypersurfaces \(H_{i} \in |\O_{M}(d_{i})|\) of degree \(d_{i} \geq \big(1+2c(k+1)(N+k(N-k))^{c(k+1)+1}\big)^{2}\) is \(k\)-infinitesimally hyperbolic.
\end{corollary*}

This property has a geometric interpretation in terms of holomorphic maps \(f\colon\C \times \B^{k-1} \to X\): it implies that every such map must be everywhere degenerate, in the sense that the Jacobian matrix of \(f\) is nowhere of maximal rank (cf. Sect.~\ref{se: hyperbolicity}). For \(k=1\), one recovers as expected the notion of hyperbolicity in the sense of Brody.

\subsubsection*{Key arguments of the proof.}
We would like to avoid any redondancy with Sect.~\ref{se:overview}, where the proof of our main theorem is outlined in an introductive way. We will hence only briefly sketch its proof here, and we will try to underline the role of the original arguments introduced in the current work. 

Our proof goes naturally in the context of general Schur powers, which justifies in itself to work with such generality.
The central objects of this paper are flag bundles, and their positive line bundles.
A first key idea is to relate the positivity of Schur powers with the positivity of these line bundles, on associated flag bundles. This natural result, in the spirit of Hartshorne's point of view on ampleness of vector bundles, has been proved only very recently by Laytimi and Nahm (\cite{LN2018}). This work gives a natural first application of their result.
This is an important ingredient, since it allows to avoid plethysm phenomena, that would occur if one follows the most naive approach of projectivizing Schur powers.

Recall that the general scheme of the proofs in~\cite{BD2018} and in the associated papers on the positivity of complete intersections is to use explicit extrinsic equations for complete intersections in order to bring back the geometric situation under consideration to a model situation in which it is easy to obtain positivity.
The key input is to have sufficiently many natural equations to cut out these generically finite families.
More precisely, a partial evaluation of the defining equations and theirs differentials (or sometimes higher order jet differentials) allows to construct some morphisms from vector bundles over some well-chosen subfamilies of complete intersections into generically finite families over projective bases. The main feature of theses morphisms is that the pullback of the tautological line bundle gives a lot of negatively twisted sections of the bundle under consideration.
Besides, in the model situation, Nakamaye's theorem (or a variant of it) allows one to control the augmented base locus of the pullback of the tautological ample line bundle on the parameter space. 
Combining carefully this two facts, one is able to prove the sought ampleness result.

Our proof follows these very lines (see Sect.~\ref{se:overview}). In our case, the matter is to construct a morphism
\(\varPsi\) from (total spaces of) flag bundles of the relative tangent space of the universal family \(\X\to S\) of well-chosen particular complete intersections to  simpler generically finite families \(\mathcal{Y} \to \FF\) of subschemes in some projective spaces (see Diagram~\ref{eqref: diagram outline}).

In order to obtain sufficiently many equations, working with flag bundles is very important here again. It allows to work with scalar extrinsic equations instead of vectorial equations and therefore avoid substantial technical difficulties.

Of course, one has to be careful with the meaning of the derivative of an extrinsic equation.
In this work we slighlty improve the presentation of~\cite{BD2018} on this matter, by showing that the good point of view on these equations (or rather their partial evaluations) is to see them as maps in certain flag bundles over universal quotients. This allows to state in a more natural way the most technical results of~\cite{BD2018}, and to apply them in our more general context.

\subsubsection*{The paper is organized as follows.}\mbox{}

Section~\ref{se:definition} is devoted to briefly introducing Schur bundles and flag bundles;

Section~\ref{se:overview} gives a more detailed overview of the proof sketched above;

Section~\ref{se:proof} gives a complete and rigorous treatment of the details of the proof;

Section~\ref{se: hyperbolicity} contains applications to \(k\)-infinitesimal hyperbolicity.

\section*{Acknowledgements}
I would like to thank my supervisor Erwan Rousseau as well as my co-supervisor Lionel Darondeau for their help and support. This work owes a lot to Lionel's insights on the subject: I could never thank him enough for the time he spent sharing it with me.

\section{Ampleness, Schur bundles and flag bundles}
\label{se:definition}
\subsection{Ampleness}
\label{subsection: ampleness}
For the theory of ample line bundles, we refer to~\cite{Laz}.
We briefly recall here Hartshorne's point of view on ampleness of vector bundles.
Note however that we will rather deal with subspaces than with quotients in our exposition.

Let \(E\) be a vector bundle over a variety \(M\) and let \(\P(E^\vee)\) denote the projective bundle of lines in its dual \(E^\vee\).
The vector bundle \(E\) is said to be \textsl{ample} on \(M\) if the Serre line bundle \(\O_{\P(E^{\vee})}(1)\) is ample on \(\P(E^{\vee})\).

Let \(\pi\colon\P(E^{\vee})\to M\) be the natural projection. 
A particular case of Bott's formula~\cite{Bott} states that for \(m\geq 1\):
\begin{align}
  \label{Bott formula}
  \pi_{*}(\O_{\P(E^{\vee})}(m))
  =
  \S^{m}E,
\end{align}
where \(\S^{m}\) is the \(m\)th symmetric power, and that 
\[
  H^{0}\left(\P(E^\vee),\O_{\P(E^{\vee})}(m)\right)
  =
  H^{0}\left(M,\S^{m}E\right).
\]

Accordingly, the ampleness of the vector bundle \(E\) is equivalent to the ampleness of its symmetric powers \(\S^{m}E\), for \(m\geq 1\).
As we will see in the sequel, this point of view can be generalized to Schur powers, looking at appropriate associated flag bundles.

\subsection{Schur bundles and flag bundles}
\label{subsection: Schur bundles}
We introduce the terminology and some facts about partitions, flag varieties, flag bundles, and Schur bundles.

Any partition \(\lambda=(\lambda_{1}\geq\dotsb\geq\lambda_{k}>0)\) with \(k\) parts is associated to a \textsl{Young diagram of shape \(\lambda\)}, which is a collection of cells arranged in left-justified rows: the first row contains \(\lambda_{1}\) cells, the second \(\lambda_{2}\), and so on (cf. Figure~\ref{fig1} for an example).
\begin{figure}[h]
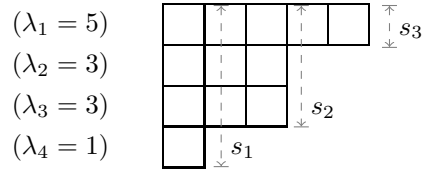

  \begin{center}
    \small
    \ytableausetup{centertableaux}
    \begin{ytableau}
      \none[(\lambda_{1}=5)]&\none[]&\none[]&&&&&&\none[\tikznode{s3}{~}]
      \\
      \none[(\lambda_{2}=3)]&\none[]&\none[]&&&
      \\
      \none[(\lambda_{3}=3)]&\none[]&\none[]&&&
      \\
      \none[(\lambda_{4}=1)]&\none[]&\none[]& & \none[]& \none[] & \none[]& \none[]& \none[\tikznode{s1}{~}]
    \end{ytableau}
    \tikz[overlay,remember picture]{%
      \draw[gray,|<->|,dashed] ([yshift=1em,xshift=-6.3em]s3.center) -- ([yshift=-.5em,xshift=-6.3em]s1.center) node[above right,black]{\(s_1\)};
      \draw[gray,|<->|,dashed] ([yshift=1em,xshift=-3.3em]s3.center) -- ([yshift=1em,xshift=-3.3em]s1.center) node[above right,black]{\(s_2\)};
      \draw[gray,|<->|,dashed] ([yshift=1em]s3.center) -- ([yshift=4em]s1.center) node[above right,black]{\(s_3\)};
    }
    \caption{Young diagram of the partition \(\lambda=(5,3^{2},1)\), with associated jump sequence \(s=(4,3,1)\).}
    \label{fig1}
  \end{center}
\end{figure}

By construction, if we read the Young diagram from top to bottom via the rows, we recover the partition \(\lambda\); observe that if instead we read it from left to right via the columns, we recover the conjugate partition \(\lambda^{*}\) mentionned in the introduction.
There is a \textsl{jump sequence} \(s=(s_{1}> \dotsb > s_{t})\) associated to a partition \(\lambda\), defined by:
\[
  \lambda_{i}>\lambda_{i+1}
  \iff
  (i\in s)
\]
(where by convention \(\lambda_{k+1}=0\)).
Equivalently, \(s_{1}>\dotsb>s_{t}\) are the (pairwise distinct) lengths of the parts of the conjugate partition \(\lambda^{\ast}\) (cf. Figure~\ref{fig1}).

Let \(V\) be a \(\mathbf{k}\)-vector space of dimension \(n\).
For a decreasing sequence of integers \[s=(s_{1}>s_{2}>\dotsb>s_{t}),\] with \(t \geq 1\), and \(s_{1} \leq n-1\), the \textsl{partial flag variety \(\Flag_{s}(V)\) of sequence \(s\)} is defined as
\[
  \Flag_{s}(V)
  \bydef
  \Set*{
    V \supsetneq F_{1}  \supsetneq \dotsb \supsetneq F_{t} \supsetneq \Set{0}
    \suchthat
    \text{\(F_{i}\) is a \(\kk\)-vector subspace of dimension \(s_{i}\)}
  }.
\]
On \(\Flag_{s}(V)\), there is a filtration of the trivial bundle \(\Flag_{s}(V)\times V\) by universal subbundles \(U_{s_{1}}\supsetneq\dotsb\supsetneq U_{s_{t}}\), given over the point \(\eta=(F_{1}\supsetneq \dotsb \supsetneq F_{t})\in\Flag_{s}V\) by \(U_{s_{i}}(\eta)=F_{i}\).

\begin{example}
  \label{ex: Grass}
  If we take the sequence \(s=(k)\), where \(k \geq 1\), then \(\Flag_{s}(V)\) is the Grassmannian of \(k\)-dimensional subspaces of \(V\), denoted \(\Grass(k, V)\), together with the tautological subbundle \(U_{k}\).
\end{example}

To each partition \(\lambda\) with jump sequence \(s\), one associates a line bundles \(\L_{\lambda}(V)\) on \(\Flag_{s}(V)\) by setting:
\[
  \L_{\lambda}(V)
  \bydef
  \bigotimes_{i=1}^{t}
  \det((U_{s_{i}}/U_{s_{i+1}})^{\vee})^{\lambda_{s_{i}}},
\]
where by convention \(U_{t+1}=\Set{0}\).
\begin{example}
  For the partition \(\lambda=(m^{k})\), \(\L_{\lambda}(V)\) is the \(m\)th tensor power of the Plücker line bundle \(\O(1)=\det(U_{k}^{\vee})\) on \(\Grass(k,V)\). This line bundle allows one to embed \(\Grass(k,V)\) in the projective space \(\P(\Ext^{k}V)\).
\end{example}

Let us now take \(E\) a vector bundle of rank \(n\) over a variety \(M\) and \(\lambda\) a partition with jump sequence \(s=(s_{1}>s_{2}>\dotsb>s_{t})\). The previous considerations make sense fiberwise, and the constructions globalize.
This allows one to define
\begin{itemize}
  \item{} the projective bundle \(\pi\colon\Flag_{s}(E^{\vee})\to M\), such that the fiber over \(x \in M\) is \(\Flag_{s}(E^{\vee}_{x})\);
  \item{} the line bundle \(\L_{\lambda}(E^{\vee})\) over \(\Flag_{s}(E^{\vee})\), such that \((\L_{\lambda}(E^{\vee}))_{| \Flag_{s}(E^{\vee}_{x})}= \L_{\lambda}(E^{\vee}_{x})\).
\end{itemize}
\begin{definition}
  \label{defi:schur}
  The \textsl{Schur bundle} associated to a partition \(\lambda\) with jump sequence \(s\) is the direct image
  \[
    \S^{\lambda}(E) 
    \bydef 
    \pi_{*}(\L_{\lambda}(E^{\vee})).
  \]
\end{definition}

This, together with the vanishing of \(R^{i}\pi_{*}(\L_{\lambda}E^{\vee})\) for \(i\geq 1\), is a proposition known as Bott's formulas in the literature (cf.~\cite{Bott}), but we take it as a definition.

Observe that for \(\lambda=(m)\) one recovers Formula~\eqref{Bott formula}. In other word, \(\S^{(m)}E=\S^{m}E\).
More generally, \(\S^{\lambda}E\) has a standard description as a quotient of products of symmetric powers, and we will now briefly sketch how to recover it from Definition~\ref{defi:schur}.
\begin{proposition}[{\cite[Sect. 2.1,Theo. 4.1.8]{Wey}}]
  \label{prop: Schur module}
  The Schur power \(\S^{\lambda}E\) is a quotient of
  \[
    \S^{\lambda_{s_1}}(\Ext^{s_{1}}E)
    \otimes
    \S^{(\lambda_{s_{2}}-\lambda_{s_{1}})}(\Ext^{s_{2}}E)
    \otimes
    \dotsb
    \otimes
    \S^{(\lambda_{s_{t}}-\lambda_{s_{t-1}})}(\Ext^{s_{t}}E).
  \]
\end{proposition}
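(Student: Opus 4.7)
The plan is to derive the proposition geometrically from Definition~\ref{defi:schur}, rather than via the combinatorial Young-symmetrizer construction of~\cite{Wey}. The first step is to rewrite $\L_{\lambda}E^{\vee}$ in a form involving only Plücker-type line bundles on Grassmannian bundles. Applying $\det$ to the short exact sequences $0\to U_{s_{i+1}}\to U_{s_{i}}\to U_{s_{i}}/U_{s_{i+1}}\to 0$ on $\Flag_{s}E^{\vee}$ and telescoping yields
\[
\L_{\lambda}E^{\vee}\;\cong\;\bigotimes_{j=1}^{t}(\det U_{s_{j}})^{\mu_{j}},\qquad \mu_{j}\bydef \lambda_{s_{j}}-\lambda_{s_{j-1}}\quad(\lambda_{s_{0}}\bydef 0);
\]
all exponents $\mu_{j}$ are nonnegative because $s_{1}>\dotsb>s_{t}$ forces $\lambda_{s_{1}}\leq\dotsb\leq\lambda_{s_{t}}$.

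Next, I would factor $\pi$ through the closed embedding
\[
\iota\colon \Flag_{s}E^{\vee}\hookrightarrow \Grass(s_{1},E^{\vee})\times_{M}\dotsb\times_{M}\Grass(s_{t},E^{\vee})
\]
sending $(F_{1}\supsetneq\dotsb\supsetneq F_{t})$ to $(F_{1},\dotsc,F_{t})$; then $\det U_{s_{j}}$ on the source coincides with $\iota^{*}p_{j}^{*}\det U_{s_{j}}$, where $p_{j}$ is the $j$-th projection. On each factor, the relative Plücker embedding $\Grass(s_{j},E^{\vee})\hookrightarrow \P(\bigwedge^{s_{j}}E^{\vee})$ together with Bott's formula~\eqref{Bott formula} produces a surjection $S^{\mu_{j}}(\bigwedge^{s_{j}}E)\twoheadrightarrow (\pi_{s_{j}})_{*}(\det U_{s_{j}})^{\mu_{j}}$. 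By flat base change and the iterated projection formula on the fibre product $\pi'\colon \prod_{M}\Grass(s_{j},E^{\vee})\to M$, the $\pi'$-pushforward of the external tensor product $\bigotimes_{j}p_{j}^{*}(\det U_{s_{j}})^{\mu_{j}}$ equals $\bigotimes_{j}(\pi_{s_{j}})_{*}(\det U_{s_{j}})^{\mu_{j}}$, and is therefore a quotient of $\bigotimes_{j}S^{\mu_{j}}(\bigwedge^{s_{j}}E)$. Since $\iota^{*}$ of this external tensor product is exactly $\L_{\lambda}E^{\vee}$, pushing down to $M$ should produce $\pi_{*}\L_{\lambda}E^{\vee}=S^{\lambda}E$ as a further quotient.

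The main obstacle is precisely this last pushforward step: one needs the restriction map to remain surjective after applying $(\pi')_{*}$, which amounts to the vanishing
\[
R^{1}\pi'_{*}\bigl(\mathcal{I}_{\Flag_{s}E^{\vee}}\otimes \bigotimes_{j}p_{j}^{*}(\det U_{s_{j}})^{\mu_{j}}\bigr)=0.
\]
Because the weights $\mu_{j}$ are dominant ($\geq 0$), I expect this to follow fibrewise from a Borel--Weil--Bott argument applied to the inclusion of $\Flag_{s}V$ inside the product of Grassmannians (which is cut out by Schubert-type incidence conditions), combined with a Leray spectral sequence. The dominance of the weights is precisely what forces $S^{\lambda}E$ to appear as a quotient, and not as a subobject, of the tensor product.
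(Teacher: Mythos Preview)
Your proposal is correct and follows essentially the same route as the paper: rewrite $\L_{\lambda}E^{\vee}$ as $\bigotimes_{j}(\det U_{s_{j}})^{\mu_{j}}$, realize it as the restriction of an external tensor product of Pl\"ucker-type line bundles along a closed embedding into a product of projective (or Grassmannian) bundles, and push forward to $M$. The paper embeds directly into $\prod_{j}\P(\bigwedge^{s_{j}}E^{\vee})$ rather than factoring through $\prod_{j}\Grass(s_{j},E^{\vee})$, and for the surjectivity after pushforward---the point you correctly isolate as the main obstacle---it simply cites the proof of Bott's theorem in~\cite{Wey} rather than sketching a Borel--Weil--Bott argument as you do.
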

\begin{proof}
  The partial flag bundle \(\Flag_{s}(E^{\vee})\) has a natural structure of projective bundle: the product of \textsl{Plücker embeddings}
  \[
    i
    \colon
    \Flag_{s}(E^{\vee})
    \longrightarrow
    \P(\Ext^{s_{1}}E^{\vee})\times\dotsb\times\P(\Ext^{s_{t}}E^{\vee})
  \]
  is fiberwise injective, and its image above a point \(x\in M\) is a closed algebraic subset of
  \[
    \P(\Ext^{s_{1}}E_{x}^{\vee})\times\dotsb\times\P(\Ext^{s_{t}}E_{x}^{\vee})
  \]
  (this product of projective varieties being itself projective via the \textsl{Segre embedding}).

  Observe that \(\L_{\lambda}(E^{\vee})\) can be rewritten:
  \[
    \L_{\lambda}(E^{\vee})
    =
    \det(U_{s_{1}}^{\vee})^{\lambda_{s_{1}}}
    \otimes
    \det(U_{s_{2}}^{\vee})^{\lambda_{s_{2}}-\lambda_{s_{1}}}
    \otimes\dotsb\otimes
    \det(U_{s_{t}}^{\vee})^{\lambda_{s_{t}}-\lambda_{s_{t-1}}},
  \]
  where for \(1\leq i \leq t\), \(U_{s_{i}}\) is the relative version of the tautological subbundles introduced above.
  Now, by the very definition of the embedding \(i\), this line bundle is actually nothing but the pullback bundle
  \[
    \L_{\lambda}(E^{\vee})
    =
    i^{*}
    \left(
    \O_{\P(\Ext^{s_{1}}E^{\vee})}(\lambda_{s_1})
    \boxtimes
    \O_{\P(\Ext^{s_{2}}E^{\vee})}(\lambda_{s_2}-\lambda_{s_1})
    \boxtimes
    \dotsb
    \boxtimes
    \O_{\P(\Ext^{s_{t}}E^{\vee})}(\lambda_{s_{t}}-\lambda_{s_{t-1}})
    \right),
  \]
  where \(\boxtimes\) denotes the \textsl{external tensor product}\footnote{Let \(X, Y\) be varieties, and let \(p_{1}\colon X\times Y \to X\), \(p_{2}\colon X \times Y \to Y\) be the first and second projection. If \(E_{1}\) is a vector bundle over \(X\), and \(E_{2}\) is a vector bundle over \(Y\), we denote \(E_{1}\boxtimes E_{2} \to X\times Y\) the vector bundle
    \[
      E_{1}
      \boxtimes E_{2}
      \bydef
      pr_{1}^{*}E_{1}
      \otimes
      pr_{2}^{*}E_{2}.
    \]
  This definition generalizes to an arbitrary finite product of varieties.}.
  Since \(i\) is a closed immersion, there is a natural surjective map
  \begin{equation}
    \label{eq:det}
    \O_{\P(\Ext^{s_{1}}E^{\vee})}(\lambda_{s_1})
    \boxtimes
    \O_{\P(\Ext^{s_{2}}E^{\vee})}(\lambda_{s_2}-\lambda_{s_1})
    \boxtimes
    \dotsb
    \boxtimes
    \O_{\P(\Ext^{s_{t}}E^{\vee})}(\lambda_{s_{t}}-\lambda_{s_{t-1}})
    \twoheadrightarrow
    i_{*}\L_{\lambda}(E^{\vee}).
  \end{equation}
  Let \(p\) be the natural projection from the total space of
  \[
    \O_{\P(\Ext^{s_{1}}E^{\vee})}(\lambda_{s_1})
    \boxtimes
    \O_{\P(\Ext^{s_{1}}E^{\vee})}(\lambda_{s_2}-\lambda_{s_1})
    \boxtimes
    \dotsb
    \boxtimes
    \O_{\P(\Ext^{s_{t}}E^{\vee})}(\lambda_{s_t}-\lambda_{s_t-1})
  \]
  to \(M\).
  Noticing that \(p_{*}i_{*}=\pi_{*}\),
  and applying \(p_{*}\) to \eqref{eq:det} yields a map
  \[
    \S^{\lambda_{s_1}}(\Ext^{s_{1}}E)
    \otimes
    \S^{(\lambda_{s_{2}}-\lambda_{s_{1}})}(\Ext^{s_{2}}E)
    \otimes
    \dotsb
    \otimes
    \S^{(\lambda_{s_{t}}-\lambda_{s_{t-1}})}(\Ext^{s_{t}}E)
    \to
    \S^{\lambda}E.
  \]
  This map is still surjective (see e.g. the proof of Bott's theorem in~\cite{Wey}),
  and its kernel constitutes the so-called \textsl{exchange relations}. One recovers the familiar description of the Schur power \(\S^{\lambda}E\) as a quotient of
  \[
    S^{\lambda_{s_1}}(\Ext^{s_{1}}E)
    \otimes
    S^{(\lambda_{s_{2}}-\lambda_{s_{1}})}(\Ext^{s_{2}}E)
    \otimes
    \dotsb
    \otimes
    S^{(\lambda_{s_{t}}-\lambda_{s_{t-1}})}(\Ext^{s_{t}}E).
    \qedhere
  \]
\end{proof}

\subsection{Ampleness of Schur bundles}
It is not very surprising that the point of view of Hartshorne on ampleness generalizes to Schur powers.
In~\cite{Demailly}, Demailly proved that if \(E\) is ample on \(M\), then \(\L_{\lambda}(E^{\vee})\) is ample on \(\Flag_{s}(E^{\vee})\). The statement can be improved by weakening the hypothesis on \(E\): it is actually enough to suppose that \(\S^{\lambda}E\) is ample in order to deduce the ampleness of \(\L_{\lambda}(E^{\vee})\).
A natural question is to ask wether the converse is true, namely, does the ampleness of the line bundle \(\L_{\lambda}(E^{\vee})\) imply the ampleness of the vector bundle \(\S^{\lambda}E\)? 
In~\cite{LN2018}, the authors answered the question:
\begin{proposition}[\cite{LN2018}]
  \label{prop:LN}
  Let \(\lambda\) be  a partition with jump sequence \(s\), and let \(E\) a vector bundle over \(M\). The Schur bundle
  \(\S^{\lambda}E\) is ample on \(M\) if and only if the line bundle \(\L_{\lambda}(E^{\vee})\) is ample on \(\Flag_{s} E^{\vee}\).
\end{proposition}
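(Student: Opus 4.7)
The plan is to construct a closed immersion $f\colon Y\bydef\Flag_{s}E^{\vee}\hookrightarrow Z\bydef\P((S^{\lambda}E)^{\vee})$ of $M$-schemes with $f^{*}\mathcal{O}_{Z}(1)=\mathcal{L}_{\lambda}E^{\vee}$, and to deduce both implications from it. Since $\mathcal{L}_{\lambda}E^{\vee}$ restricts to a very ample line bundle on each fiber of $\pi\colon Y\to M$ (by Borel--Weil applied to the dominant weight $\lambda$), the evaluation map $\pi^{*}S^{\lambda}E=\pi^{*}\pi_{*}\mathcal{L}_{\lambda}E^{\vee}\twoheadrightarrow\mathcal{L}_{\lambda}E^{\vee}$ is surjective, yielding such an $f$. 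Fiberwise at $x\in M$, $f$ is the Borel--Weil embedding of $\Flag_{s}(E_{x}^{\vee})$ into $\P((S^{\lambda}E_{x})^{\vee})$, which is a closed immersion because its image is the $\mathrm{GL}(E_{x})$-orbit of the highest-weight line in the irreducible representation $S^{\lambda}E_{x}$. By properness over $M$, $f$ is itself a closed immersion.

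The forward direction is then immediate: if $S^{\lambda}E$ is ample, Hartshorne's definition gives $\mathcal{O}_{Z}(1)$ ample on $Z$, and hence its pullback $\mathcal{L}_{\lambda}E^{\vee}=f^{*}\mathcal{O}_{Z}(1)$ is ample on $Y$.

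For the converse, suppose $\mathcal{L}_{\lambda}E^{\vee}$ is ample. Combining the projection formula with the relative Bott vanishing $R^{i}\pi_{*}\mathcal{L}_{m\lambda}E^{\vee}=0$ for $i\geq 1$, one obtains that for every coherent sheaf $\mathcal{G}$ on $M$, the sheaf $\mathcal{G}\otimes S^{m\lambda}E$ is globally generated and has vanishing higher cohomology for $m$ large. To upgrade this to ampleness of $S^{\lambda}E$ itself, apply $\pi_{Z*}$ to the twist by $\mathcal{O}_{Z}(m)$ of the short exact sequence $0\to\mathcal{I}_{Y}\to\mathcal{O}_{Z}\to f_{*}\mathcal{O}_{Y}\to 0$: this yields the canonical surjection $S^{m}(S^{\lambda}E)\twoheadrightarrow S^{m\lambda}E$ and identifies the obstruction to lifting sections with $\pi_{Z*}(\mathcal{I}_{Y}\otimes\mathcal{O}_{Z}(m))$.

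The main obstacle is precisely this upgrade: deducing ampleness of $\mathcal{O}_{Z}(m)$ on $Z$ from that of $f^{*}\mathcal{O}_{Z}(m)=\mathcal{L}_{m\lambda}E^{\vee}$ on $Y$, since pullback by a closed immersion does not reflect ampleness in general. The decisive extra ingredient is that $f$ is linearly non-degenerate on each fiber: since $S^{\lambda}E_{x}$ is an irreducible $\mathrm{GL}(E_{x})$-representation, the orbit of its highest-weight line spans it, forcing the fiberwise restriction on global sections to be an isomorphism. Combining this with a base-locus analysis of $\mathcal{I}_{Y}$, whose fiberwise generators are the Pl\"ucker and Young relations behind Proposition~\ref{prop: Schur module}, one transfers emptiness of the base locus of $\mathcal{L}_{m\lambda}E^{\vee}$ on $Y$ to emptiness of the augmented base locus of $\mathcal{O}_{Z}(m)$ on $Z$, yielding ampleness of $S^{\lambda}E$.
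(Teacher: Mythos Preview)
The paper does not actually supply a proof of this proposition: it is quoted verbatim from~\cite{LN2018} and used as a black box. So there is no ``paper's own proof'' to compare against; I will assess your argument on its own.

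Your construction of the closed immersion $f\colon Y=\Flag_{s}E^{\vee}\hookrightarrow Z=\P((S^{\lambda}E)^{\vee})$ with $f^{*}\mathcal{O}_{Z}(1)=\mathcal{L}_{\lambda}E^{\vee}$ is correct, and the forward implication follows immediately from it as you say.

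The converse, however, is not complete. You correctly deduce from the projection formula and Bott vanishing that $\mathcal{G}\otimes S^{m\lambda}E$ is globally generated with vanishing higher cohomology for all coherent $\mathcal{G}$ and $m\gg0$. The difficulty is that ampleness of $S^{\lambda}E$ requires this for $\mathcal{G}\otimes S^{m}(S^{\lambda}E)$, and $S^{m}(S^{\lambda}E)$ is in general a direct sum of many Schur powers $S^{\mu}E$ with $\mu\neq m\lambda$; controlling only the summand $S^{m\lambda}E$ is not enough. Your final paragraph attempts to bridge this gap, but it does not: the linear non-degeneracy of $f$ on fibers gives only that the degree-one restriction $H^{0}(Z_{x},\mathcal{O}(1))\to H^{0}(Y_{x},\mathcal{L}_{\lambda}E^{\vee}_{x})$ is an isomorphism, whereas for $m>1$ the map $S^{m}(S^{\lambda}E_{x})\to S^{m\lambda}E_{x}$ is only surjective, with a large kernel $H^{0}(Z_{x},\mathcal{I}_{Y_{x}}(m))$. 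Knowing that $\mathcal{L}_{m\lambda}E^{\vee}$ has empty base locus on $Y$ says nothing about points of $Z\setminus Y$, and there is no general mechanism by which ``$L|_{Y}$ ample plus $Y$ fiberwise non-degenerate'' forces $L$ ample on $Z$ (already for $\lambda=(2)$ and $\rank E=3$, the Veronese surface $Y_{x}\subset Z_{x}=\P^{5}$ misses a generic line). The phrase ``base-locus analysis of $\mathcal{I}_{Y}$'' is a placeholder for the missing argument, not an argument.

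In short: the forward direction is fine, but the converse needs a genuinely different idea than the one sketched. If you want to repair it, one route is to work with the cohomological criterion directly on $Y$ and show that every $S^{\mu}E$ appearing in the plethysm of $S^{m}(S^{\lambda}E)$ can itself be realised as a pushforward from a flag bundle dominating $\Flag_{s}E^{\vee}$ on which the pullback of $\mathcal{L}_{\lambda}E^{\vee}$ stays ample; alternatively, consult~\cite{LN2018} for their actual proof.
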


To our knowledge, the current work is the first application of this very nice result.

\begin{remark}
  \label{rema:LN}
  A direct consequence is that \(\S^{\lambda}E\) is ample if and only if \(\S^{m\lambda}E\) is ample. Note that this is a particular case of a result known as dominance theorem for quite a long time and that the proof in~\cite{LN2018} proceeds rather in the other direction.
\end{remark}

\begin{example}
  In the case \(\lambda=(1^{k})\), the proposition states that
  \(\Ext^{k} E\) is ample on \(M\)
  if and only if
  the Plücker line bundle is ample on \(\Grass(k, E^{\vee})\).
\end{example}

\subsection{Embedded Schur powers}
\label{subsection: extrinsic}
The first obstacle one encounters in trying to generalize the method used in~\cite{BD2018} to the case of Schur powers is in writing extrinsic equations defining the Schur power of the cotangent bundle of a complete intersection inside the Schur power of the cotangent bundle of the ambiant space. 
Let us illustrate this with the example of exterior powers.
Consider a generic homogeneous polynomial \(E\in H^{0}(M,\O_{M}(d))\), and the hypersurface \(H\bydef(E=0)\), assumed to be smooth. Let \(V\) be a trivializing open set for \(O_{M}(1)\). 

The embedded tangent space \(TH\) in \(TM\) is given on \(V\) by:
\[
  TH_{|V}
  =
  \Set*{
    (x,v) \in TM_{|V}
    \suchthat
    \substack{
      E(x)=0
      \\
      \diff E(x,v)=0 
    }
  }.
\]
We see that the total space \(TH\) is of codimension \(2\) inside \(TM\), and that we have two natural scalar equations defining it.

Let us continue this illustration with the case of the higher exterior powers. The natural equation that, added to the equation \(E=0\), defines locally \(\Ext^{k}TH\) inside \(\Ext^{k}T\P^{N}\) becomes vectorial.  
It is the linear map given on pure tensors by
\[
  \diff E
  (x, v_{1} \wedge \dotsb \wedge v_{k}) 
  =
  \sum_{j=1}^{k} (-1)^{j}\diff E(x, v_{j})
  {v_{1}\wedge\dotsb\wedge v_{j-1}\wedge v_{j+1}\wedge\dotsb \wedge v_{k}}.
\]
Vectorial equations are not easily dealt with in our approach.
However, if one restricts oneself to pure tensors \((x,v_{1} \wedge ... \wedge v_{k})\), the vectorial equation splits into \(k\) scalar equations: \(\diff E(x, v_{1})=0,\dotsc,\diff E(x,v_{k})=0\). 
Since non-zero pure tensors parametrize the Grassmann bundle \(\Grass(k, TM)=\Flag_{(k)}TM\), we get that:
\[
  \Grass(k, TH)
  \overset{loc}{=}
  \Set*{
    (x, [v_{1}\wedge... \wedge v_{k}])
    \in \Grass(k, TM)
    \suchthat
    \substack{
      E(x)=0,\\
      \diff E(x, v_{1})
      =0\\
      \vdots\\
      \diff E(x, v_{k})
      =
      0
    }
  }.
\]
The bundle \(\Ext^{k}TH\) can then be thought of as the Grassmann bundle \(\O(1)\) on this variety.

This phenomenon generalizes to arbitrary Schur powers. Consider a partition \(\lambda\) with jump sequence \(s=(s_{1}>\dotsb>s_{t})\). In analogy to the case of exterior powers, one has:
\[
  \Flag_{s}(TH)
  \overset{loc}{=}
  \Set*{
    (x, F_{1} \supsetneq ... \supsetneq F_{t})
    \in \Flag_{s}(TM)
    \suchthat
    \substack{
      E(x)=0,\\
      \diff E(x,F_{1})=\Set{0}
    }
  }.
\]
Here, in order to get the good number of equations, one has to consider only pure tensors that are adapted to the flag.
The bundle \(\S^{\lambda}TH\) can then be thought of as the line bundle \(\L^{\lambda}(TH)\) on this variety, which is the restriction of \(\L^{\lambda}(TM)\).

\begin{remark}
  Note the following subtelty: here we could consider the differential of \(E\) on the pointed cone \(\hat{M}\) over \(M\) given by the very ample line bundle \(\O_{M}(1)\) and \(v_{i}\in T\hat{M}\). Indeed, \(TM\) can be seen as a quotient of \(T\hat{M}\) (this gives rise to the famous Euler sequence), and this quotient behaves well with respect to subvarieties. This is actually a natural choice regarding our strategy and from now on, we will only imply this convention. The advantage is that we can use homogeneous coordinates \(\xi_{0},\dotsc,\xi_{N}\) and their differentials, but the counterpart is that \(\diff E\) is well-defined only on a trivializing subset for \(O_{M}(1)\) (because \(\diff\xi\) is not a global section on \(M\)).
\end{remark}

To sum up, the advantage of working with the appropriate flag variety \(\Flag_{s}(TM)\) instead of the projectivization \(\P(\S^\lambda TM)\) is twofold:
we have seen that we can study ampleness of Schur powers via natural line bundles,  therefore avoiding the problems of plethysm (we study \(\S^{m\lambda}\) instead of \(\S^{m}\S^{\lambda}\)), and we get scalar equations instead of vectorial equations.

\section{Outline of the proof}
\label{se:overview}
In this section we give a short outline of the proof, containing all relevant arguments, but neglecting some unavoidable technical difficulties along coordinate hyperplanes.
In the full proof, this difficulties leads to actually work on the stratification of \(\Flag_{s}(TM)\) induced by these hyperplanes, with refined arguments.

\subsection{Setting and notation}
\label{sse:setting}
For the rest of the paper, we fix an algebraically closed field \(\kk\) of characteristic \(0\), a projective algebraic variety \(M\) of dimension \(N\), equipped with a fixed very ample line bundle \(\O_{M}(1)\), and \((N+1)\) sections \(\xi_{0}, \dotsc, \xi_{N}\) of \(\O_{M}(1)\) in general position. This means that each \(D_{i}\bydef \{\xi_{i}=0\}\) is smooth, and that the divisor \(D=\sum_{i} D_{i}\) is simple normal crossing. In the special case \(M=\P^{N}\), the sections \(\xi_{0}, \dotsc, \xi_{N}\) are homogeneous coordinates in \(\P^{N}\). 

Ampleness being an open property in families, in order to prove our main theorem, we are allowed to work with special equations.
We will work with hypersurfaces that can (slightly abusively) be called \textsl{Fermat-type hypersurfaces} (or more rigorously deformations of such Fermat-type hypersurfaces).
For \(\varepsilon \geq 1\), \(\delta \geq 1\), and \(r\geq 1\), these equations are modeled after the equation
\[
  \sum_{\abs{J}=\delta} a_{J}(x) \mathbi{\xi}(x)^{(r+1)J}
  \in
  H^{0}(M,\varepsilon+(r+1)\delta),
\]
where each \(a_{J}\) is a section of \(\O_{M}(\varepsilon)\),
and where we use the standard multi-index notation for sequences \(J = (j_{0},\dotsc,j_{N})\)
with \textsl{length} \(\abs{J} \bydef j_{0}+\dotsb+j_{N}=\delta\).
Taking \(\varepsilon=0\), one would therefore recover the genuine Fermat-type hypersurfaces.
We denote by 
\[
  N_{\delta} 
  \bydef 
  \dim H^{0}\left(\P^{N}, \O_{\P^{N}}(\delta)\right)
\]
the number of parameters defining a homogeneous polynomial in \((N+1)\) variables of degree \(\delta\in\N\).
Our equations are then parametrized by \(r\) and by
\[
  S_{\varepsilon,\delta}
  \bydef 
  H^{0}(M,\O_{M}(\varepsilon))^{N_{\delta}}.
\] 
In the sequel \(r\) is a global parameter that will be fixed later according to our needs.

Now, for collections \(\mathbi{\varepsilon}\), \(\mathbi{\delta}\) of \(c\) parameters, one can consider the parameter space
\[
  S_{\mathbi{\varepsilon},\mathbi{\delta}}
  \bydef
  S_{\varepsilon_{1},\delta_{1}}
  \times
  \dotsb
  \times
  S_{\varepsilon_{c},\delta_{c}}
\]
and the universal family \(\bar\X\) in \(M\times S_{\mathbi{\varepsilon},\mathbi{\delta}}\).
A general fiber of the family \(\bar{\X} \to S_{\mathbi{\varepsilon},\mathbi{\delta}}\) being smooth (see e.g.~\cite{BD2018}), there is an open dense subset
\(S \subset S_{\mathbi{\varepsilon},\mathbi{\delta}}\)
parametrizing smooth complete intersection varieties.
We denote \(\X\to S\) the corresponding universal family,
and denote by \(X_{\aa^{\bigcdot}}\bydef\pi_{\X}^{-1}(\aa^{\bigcdot})\) the member of \(\X\) parametrized by \(\aa^{\bigcdot}\in S\).

Next, we consider partitions \(\lambda=(\lambda_{1}\geq \dotsb \geq \lambda_{k}>0)\) with \(k\) parts and jump sequence \(s=(s_{1}>s_{2}>\dotsb>s_{t})\), with \(k=s_{1} \leq (N-1)\) and \(t\geq 1\).
Note the particularly important role of \(k\) in the codimension hypothesis of our main theorem.
As we are interested in ampleness of \(\lambda\)th Schur powers, Remark~\ref{rema:LN} allows us to assume without loss of generality that \(\gcd(\lambda_{1}, \dotsc, \lambda_{k})=1\). 
Consider then the partial flag bundle of the relative tangent bundle \(T_{\X/S}\) associated to the partition \(\lambda\)
\[
  \pi_{\X}
  \colon
  \Flag_{s}(T_{\X/S})
  \to
  \X,
\]
and the line bundle \(\L_{\lambda}(T_{\X/S})\to\Flag_{s}(T_{\X/S})\).
By Proposition~\ref{prop:LN} and by the openness property of ampleness, 
we need now to establish that 
\[
  \L_{\lambda}(T_{\X/S})_{\aa^{\bigcdot}}
  =
  \L_{\lambda}(T_{X_{\aa^{\bigcdot}}})
\]
is ample for some \(\aa^{\bigcdot}\in\S\).

Let \(E_{1},\dotsc,E_{c}\in S\) be the universal homogeneous equations cutting out \(\X\).
Let \(V\) be a trivializing open set for \(\O_{M}(1)\).
As we saw in Sect.~\ref{subsection: extrinsic}, a local description for \(\Flag_{s}(T_{\X/S}) \subset S \times \Flag_{s}(TM)\) is given by the universal equations \((E_{1},\dotsc, E_{c})\) and their relative differentials \((\diff E_{1},\dotsc,\diff E_{c})\):
\begin{longequation*}
  \Flag_{s}(T_{\X/S})
  \stackrel{loc}{=}
  \Set*{
    \left(
      \aa^{\bigcdot}=(\aa^{1}, \dotsc, \aa^{c}), (x,F_{1} \supsetneq \dotsb \supsetneq F_{t})
    \right)
    \in S\times \Flag_{s}(TM)
    \suchthat
    \substack{
      E_{1}(\aa^{1},x) = 0
      \\
      \vdots
      \\
      E_{c}(\aa^{c},x) = 0
      \\
      \diff E_{1}(\aa^{1},x,F_{1}) = \Set{0}
      \\
      \vdots
      \\
      \diff E_{c}(\aa^{c},x,F_{1}) = \Set{0}
    }
  }.
\end{longequation*}
Beware however that the (relative) differentials \(\diff E_{i}\) are taken on the pointed cone \(\hat M\) over \(M\) and makes sense only locally on \(M\).

A special feature of Fermat-type equations is that such equations and their differentials can formally be written in very similar forms. Indeed, one can write:
\[
  E_{p}(\aa^{p},x)
  =
  \sum_{\abs{J}=\delta_{p}} \alpha_{J}(\aa^{p},x)(\mathbi{\xi}(x)^{r})^{J},
\]
for homogeneous polynomials of degree \(\varepsilon_{p}+\delta_{p}\)
\[
  \alpha_{J}(\aa^{p},x)
  \bydef
  a_{J}^{p}(x) \mathbi{\xi}(x)^{J}
  \\
\]
and one can write in a similar way
\[
  \diff E_{p}(\aa^{p},x,v)
  \stackrel{\textit{loc}}{=}
  \sum_{\abs{J}=\delta_{p}} \theta_{J}(\aa^{p},x,v)(\mathbi{\xi}(x)^{r})^{J},
\]
where this time
\[
  \theta_{J}(\aa^{p},x,v)
  \bydef
  \mathbi{\xi}(x)^{J}\diff a_{J}^{p}(x,v) 
  +
  (r+1)a_{J}^{p}(x)\diff\mathbi{\xi}(x,v)^{J}.
\]
Here, the coefficients \(\theta_{J}(\aa^{p}, \cdot)\) are \(1\)-forms on \(\hat{M}\) depending on the choice of the trivialization for \(\O_{M}(1)_{| V}\).
Let us emphasize once again that this expression is local, and depends on the trivialization of \(\O_{M}(1)\), in the exact same way as the forms \(\diff\xi_{i}\) do.

However, we will now perform a partial evaluation of these equations and establish that, in this way, for each \(p\), the coefficients \(\alpha_{J}^{p}\) and \(\theta_{J}^{p}\) can be seen as the relative coordinates on a certain flag variety. Moreover, we will show that this natural construction does not depend on the trivialization on \(M\).

We fix \(p\) and construct first a map \(M\to\P^{N_{\delta_{p}-1}}\) by considering the line \(\ell_{\aa^{p}}(x)\) generated by the vector \((\alpha_{J}(\aa^{p},x))_{J}\). Let 
\[
  \mathcal{Q}^{p}
  \bydef
  \kk^{N_{\delta_{p}}}\diagup\O(-1)
\]
be the universal quotient bundle on \(\P^{N_{\delta_{p}-1}}\).
We then construct a map 
\[
  \Flag_{s}(TM)_{(x,F)}
  \to
  \Flag_{s}(\mathcal{Q}^{p})_{\ell_{\aa^{p}}(x)}
\] 
by considering the flag made of the subspaces \(\Span\left(\Set*{(\theta_{J}(\aa^{p},x,v))_{J}\suchthat v\in F_{i}}\right)\diagup\ell_{\aa^{p}}(x)\), for \(i=1,\dotsc,t\).

In order to proceed rigorously, which is done in Sect.~\ref{se:psi}, one needs to establish that:
\begin{itemize}
  \item
    the evaluation map \((\alpha_{J}(\aa^{p},\cdot))_{J}\) does not vanish on \(M\);
  \item
    \(\forall x\in M\), the map
    \((\theta_{J}(\aa^{p},x,\cdot))_{J}\colon T_{x}M \to\kk^{N_{\delta_{p}}}\diagup\ell_{\aa^{p}}(x)\) is injective.
\end{itemize}
If \(c\) is large enough, this can be achieved simultaneously for all \(p\) for adequate choices of the parameters \(\mathbi{\varepsilon}\) and \(\mathbi{\delta}\), up to shrinking \(S\) into an open dense subset \(S^{\circ}\).

Moreover, we show that the construction actually does not depend on the choice of a trivialization and globalizes.
We hence get a well-defined global morphism
\[
  \Theta
  \colon
  S^{\circ}\times\Flag_{s}(TM)
  \to
  \Flag_{s}(\mathcal{Q}^{1})
  \times\dotsb\times
  \Flag_{s}(\mathcal{Q}^{c}).
\]
This in turn allows one to define a morphism
\[
  \varPsi
  \colon
  \left\vert
  \begin{array}{ccc}
    S^{\circ}
    \times
    \Flag_{s}(TM)
    & \longrightarrow 
    &
    \Flag_{s}(\mathcal{Q}^{1})
    \times\dotsb\times
    \Flag_{s}(\mathcal{Q}^{c})
    \times 
    \P^{N}
    \\
    \left(
      \aa^{\bigcdot}
      ,
      (x,F)
    \right)
    &
    \longmapsto
    &
    \left(\Theta(\aa^{\bigcdot},(x,F)), [\mathbi{\xi}(x)^{r}]\right)
    \\
  \end{array}
\right..
\]
By construction, the restriction of this morphism \(\varPsi\) to \(T_{\X/S^{\circ}}\subset S^{\circ}\times\Flag_{s}(TM)\) factors through the universal family in 
\(\Flag_{s}(\mathcal{Q}^{1}) \times\dotsb\times \Flag_{s}(\mathcal{Q}^{c}) \times \P^{N}\),
defined pointwise as
\[
  \Set*{
    \left((\ell^{1}, F^{1}),\dotsc,(\ell^{c}, F^{c});Z\right)
    \suchthat
    \substack{
      \nu_{\delta_{i}}(Z)\perp \ell^{i}
      \\
      \nu_{\delta_{i}}(Z)\perp F^{i}
    }
  },
\]
where \(\nu_{\delta}\) denote the Veronese embedding of \(\P^{N}\) in \(\P^{N_{\delta}-1}\).

Recall the classical isomorphism of flag bundles 
\[
  \Flag_{s}(\mathcal{Q}^{p})
  \sim
  \Flag_{(s_1+1,\dotsc,s_{t}+1,1)}(\kk^{N_{\delta_{p}}}),
\]
given pointwise plainly by
\[
  (V_{1}\supsetneq\dotsb\supsetneq V_{t};\ell)
  \mapsto
  (V_{1}\oplus\ell\supsetneq\dotsb\supsetneq V_{t}\oplus\ell\supsetneq \ell).
\]
This identification allows one to define forgetful maps 
\[
  \Flag_{s}(\mathcal{Q}^{p}) \to \Flag_{(s_1+1,\dotsc,s_{t}+1)}(\kk^{N_{\delta_{p}}}),
\]
and for technical reasons, we will rather compose \(\Theta\) by these maps and consider the universal family
\[
  \mathcal{Y}
  \subset
  \Flag_{(s_1+1,\dotsc,s_{t}+1)}(\kk^{N_{\delta_{1}}})
  \times\dotsb\times
  \Flag_{(s_1+1,\dotsc,s_{t}+1)}(\kk^{N_{\delta_{c}}})
  \times
  \P^{N},
\]
defined pointwise as
\[
  \Set*{
    \left(F^{1},\dotsc,F^{c};Z\right)
    \suchthat
    \nu_{\delta_{i}}(Z)\perp F^{i}
  }.
\]
The family \(\mathcal{Y}\) is defined by \((1+k)\times c\) equations.
Note that if \((1+k)c\geq N\), like in the Main Theorem, then its generic non-empty fiber in \(\P^{N}\) is finite.

\subsection{The model situation}
\label{sse:model situation}
To sum up, we have obtained the following commutative diagram
\begin{align}
  \label{eqref: diagram outline}
  \xymatrix{
    \Flag_{s}(T_{\X/{S^{\circ}}})
    \ar[rr]^{\varPsi}
    \ar[rrd]^{\Theta}
    \ar[d]
&
&
\mathcal{Y}
\ar[r]
\ar[d]^{\rho}
&
\P^{N}
\\
S^{\circ}
&
&
\mathbi{F}
}
\end{align}
where
\(
\mathbi{F}
\bydef
\Flag_{(s_1+1,\dotsc,s_{t}+1)}(\kk^{N_{\delta_{1}}})
\times\dotsb\times
\Flag_{(s_1+1,\dotsc,s_{t}+1)}(\kk^{N_{\delta_{c}}})
\),
and where \(\rho\) is the first projection.

For each partition \(\mu\) with jumps \((s_1+1,\dotsc,s_{t}+1)\), there is an ample line bundle
\[
  L_{\mu}
  \bydef
  \L_{\mu}(\kk^{N_{\delta_{1}}})
  \boxtimes\dotsb\boxtimes
  \L_{\mu}(\kk^{N_{\delta_{c}}})
\]
on \( \FF \).
We have already seen that the morphism \(\rho\) is generically finite onto its image for codimensions \(c\geq N/(1+k)\).
The pullback \(\rho^{*}L_{\mu}\) of the ample line bundle \(L_{\mu}\) is therefore big and nef.

Moreover, one can obtain a geometric control of the augmented base locus of \(L_{\mu}\) via Nakamaye's theorem:
\begin{equation}
  \label{eq:effectivity}
  \Bs((L_{m\mu}\boxtimes \O_{\P^{N}}(-1))_{| \mathcal{Y}})
  \subset
  \Exc(\rho)
  \qquad
  m\gg1,
\end{equation}
where the exceptional locus \(\Exc(\rho)\) is the reunion of all positive dimensional fibers of the morphism \(\rho\colon\mathcal{Y}\to\mathbi{F}\).
In order to get an effective bound for \(m\), one can use Deng's \og effective Nakamaye's theorem\fg (see \cite{deng2017diverio} and the end of Sect.~\ref{se:psi}).

\subsection{Pulling back the positivity from \texorpdfstring{\(\mathcal{Y}\)}{Y} to \texorpdfstring{\(\mathbi{\Flag_{s}}(\mathbf{T}\mathbi{\X}/_{S^{\circ}})\)}{Flag(TX/S)}}
\label{sse:pulling back positivity}
Let now consider the partition \(\mu\bydef(\lambda_{1},\lambda_{1},\dotsc,\lambda_{k})\).
By construction one has
\[
  \Theta^{*}L_{m\mu}
  =
  \L_{cm\lambda}(T_{\X/S})\otimes\O_{\X}\big(cm(\lambda_{1}+\abs{\lambda})(\abs{\mathbi{\delta}}+\abs{\mathbi{\varepsilon}})\big),
\]
where
\[
  \O_{\X}(1)
  \bydef 
  (\O_{S}\boxtimes \O_{M}(1))\vert_{\X}.
\]
One has then
\begin{equation}
  \label{eq:pullback}
  \varPsi^{*}
  (L_{m\mu}\boxtimes \O_{\P^{N}}(-1))_{| \mathcal{Y}}
  =
  \L_{cm\lambda}(T_{\X/S})\otimes\O_{\X}\big(cm(\lambda_{1}+\abs{\lambda})(\abs{\mathbi{\delta}}+\abs{\mathbi{\varepsilon}})-r\big).
\end{equation}
In other words, the sections of \(L_{m\mu}\) yield sections of Schur powers of the relative cotangent bundle to the family \(\X\). 
Moreover, provided \(r\) is large enough, which can always be done, these sections vanish on an ample divisor.

The next and final step is to investigate the base locus of the sections obtained in this way. This is the object of Sect.~\ref{se:avoiding the exceptional locus}. It is established that for \(\mathbi{\delta}\) large enough, up to schrinking \(S^{\circ}\), any curve \(\mathcal{C}\) in a fiber of \(\Flag_{s}T_{\X/S}\) satisfies \(\varPsi(\mathcal{C}) \not\subset \Exc(\rho)\).
Considering the inclusion \eqref{eq:effectivity}, one infers
\begin{align}
  \label{eq:6}
  \varPsi(\mathcal{C})
  \not\subset
  \Bs((L_{m\mu}\boxtimes \O_{\P^{N}}(-1))_{| \mathcal{Y}}).
\end{align}
In view of \eqref{eq:pullback}, this shows that a negative twist of the (relatively ample) line bundle \(\L_{cm\lambda}(T_{\X/S})\) is nef over its generic fibers, and this leads to the sought ampleness.

This finishes the quick proof of the Main Theorem.
We will now give all the details.

\section{Proof of the main theorem}
\label{se:proof}

\subsection{Reduction to the model situation}
\label{se:psi}
We work with the setting and notation of Sect.~\ref{sse:setting}, on an arbitrary projective variety \((M,\O_{M}(1))\). 

Let us first define rigorously the morphism 
\[
  \Theta
  \colon
  S
  \times
  \Flag_{s}(TM)
  \to
  \Flag_{(s_{1}+1, \dotsc, s_{t}+1)}(\kk^{N_{\delta_{1}}})
  \times\dotsb\times
  \Flag_{(s_{1}+1,\dotsc, s_{t}+1)}(\kk^{N_{\delta_{c}}})
\]
already introduced in Sect~\ref{se:overview}.

\begin{lemma}
  \label{lemma: line}
  For a generic choice of parameter \(\aa^{p}\),
  \[
    \ell_{\aa^{p}}
    \bydef
    \big(
      \alpha_{J}(\aa^{p},\cdot)
    \big)_{J}
    \in 
    \kk^{N_{\delta_{p}}}
  \]
  does not vanish on \(M\) as soon as \(\delta_{p} \geq 1\).
\end{lemma}
\begin{proof}
  This statement follows easily from the very definition of \(\ell_{\aa^{p}} \). Its proof relies on the use of a suitable stratification of the space \(M\), along the lines of the proof of \cite[Proposition 2.6]{BD2018}.
\end{proof}

\begin{lemma}
  \label{lemma: global}
  Let \(\aa^{p} \) be a parameter such that 
  \(\ell_{\aa^{p}}\)
  does not vanish on \(M\). The map \( TM \to \mathcal{Q}^{p} \) locally defined by
  \[
    (x,v)
    \mapsto
    \Big(\ell_{\aa^{p}}(x); \big(\theta_{J}(\aa^{p},x,v)\big)_{J}\Big)
  \]
  extends to a global morphism 
  \( \Theta_{\aa^{p}}\colon
  TM\otimes \O_{M}(\varepsilon_{p}+\delta_{p}) 
  \to
  \mathcal{Q}^p.
  \)

\end{lemma}

\begin{proof}
  Consider two open sets \(V\) and \(V'\) of \(M\) that trivializes the line bundle \(\O_{M}(1)\). Denote by \(g\) the corresponding transition map for \(\O_{M}(1)\). By construction, on the chart \( V\), for any \(J\),
  \[
    \diff\big((\alpha_{J})_{V}(\aa^{p},\cdot)  (\mathbi{\xi})_{V}^{rJ}\big)
    =
    (\theta_{J})_{V}(\aa^{p},\cdot)  (\mathbi{\xi})_{V}^{rJ}.
  \]
  Accordingly, the right-hand side behaves like the left-hand side under a change of chart. Namely on \( V \cap V'\) one has
  \[
    (\theta_{J})_{V}(\aa^{p},\cdot)
    =
    g^{\varepsilon_{p}+\delta_{p}} 
    (\theta_{J})_{V'}(\aa^{p},\cdot)
    +
    \left( (\varepsilon_{p}+(r+1)\delta_{p}) \frac{\diff g}{g}\right) (\alpha_{J})_{V}(\aa^{p},\cdot).
  \]
  One infers that for any \((x,v) \in TM\)
  \[
    \Big(\big(\theta_{J})_{V}(\aa^{p},(x,v)\big)\Big)_{J}
    =
    \Big(\big(\theta_{J})_{V'}(\aa^{p},(x,g^{\varepsilon_{p}+\delta_{p}}(x)v)\big)\Big)_{J} \mod \ell_{\aa^{p}}(x),
  \]
  which yields the result.
\end{proof}

\begin{proposition}
  \label{prop: global}
  Suppose that \(\delta_{p} \geq 2 \). For a generic choice of parameter \(\aa^{p}\), the linear map \(\Theta_{\aa^{p}}\) is injective.
\end{proposition}
\begin{proof}
  The proof of this statement relies on the very ampleness of \(\O_{M}(1)\) and a dimension count (\cite[Lemma 2.5]{BD2018}). It goes again along the same lines as the proof of \cite[Proposition 2.6]{BD2018}.
\end{proof}

By Proposition~\ref{prop: global}, there exists  a dense open subset \(S^{\circ}\) in \(S\) such that for any \((\aa^{1}, \dotsc, \aa^{c})\) in \(S^{\circ}\), all morphisms \( \Theta_{\aa^{p}} \) are injective. We fix such a dense open set \(S^{\circ}\).
We are now in position to define the map \(\Theta\): it is the natural map induced at the level of flag bundles by \(\Theta_{\aa^{1}}, \dotsc, \Theta_{\aa^{p}} \). For technical reasons, we will rather compose these morphisms \(\Theta_{\aa^{p}}\) by the forgetful maps
\[
  \Flag_{s}(\mathcal{Q}^{p})
  \to
  \Flag_{(s_{1}+1, \dotsc, s_{t}+1)} \kk^{N_{\delta_{p}}}.
\]
To sum up, the previous considerations result in the following definition.
\begin{definition}
  Define
  \[
    \Theta\colon
    \left|
    \begin{array}{ccc}
      S^{\circ} \times \Flag_{s}TM  & \longrightarrow & \Flag_{(s_{1}+1, \dotsc, s_{t}+1)}(\kk^{N_{\delta_{1}}}) \times \dotsb \times \Flag_{(s_{1}+1, \dotsc, s_{t}+1)}(\kk^{N_{\delta_{c}}})
      \\
      \big(\aa^{\bigcdot}; (x,F)\big)
                                    &
                                    \longmapsto 
                                    &
                                    \Theta_{\aa^{1}} (x,F)
                                    \times 
                                    \dotsb
                                    \times
                                    \Theta_{\aa^{c}} (x,F)
    \end{array}
  \right. .
\]
\end{definition}
By pull-back, the morphism \(\Theta_{\aa^{\bigcdot}}\bydef\Theta(\aa^{\bigcdot}, \cdot)\) produces positively twisted sections of the Schur power \(S^{\lambda}\Omega_{M}\):
\begin{lemma}
  \label{lemma: pull-back}
  Recall that  \(\mu=(\lambda_{1}, \lambda_{1}, \dotsc, \lambda_{k})\). For \(\aa^{\bigcdot} \in S^{\circ}\),
  the following equality of line bundles on \(\Flag_{s}(TM)\) is satisfied:
  \[
    \Theta_{\aa^{\bigcdot}}^{*}
    L_{\mu} 
    =
    \L_{c\lambda}(TM)
    \otimes
    \pi_{M}^{*}
    \O_{M}\left(
      (\abs{\lambda}+\lambda_{1})(\abs{\mathbi{\varepsilon}} + \abs{\mathbi{\delta}})
    \right),
  \]
  where \( \pi_{M}\colon \Flag_{s}(TM) \to M \) is the canonical projection onto the space \(M\).
\end{lemma}
\begin{proof}
  Note that it is enough to show the following equality for any \(1 \leq p \leq c \)
  \[
    \Theta_{\aa^{p}}^{*}
    \L_{\mu} (\kk^{N_{\delta_{p}}})
    =
    \L_{\lambda}(TM)
    \otimes
    \pi_{M}^{*}
    \O_{M}\left(
      (\abs{\lambda}+\lambda_{1})(\varepsilon_{p} + \delta_{p})
    \right).
  \]
  (Recall that    \( L_{\mu}=\L_{\mu}(\kk^{N_{\delta_{1}}}) \boxtimes \dotsb \boxtimes \L_{\mu}(\kk^{N_{\delta_{c}}})\) ).

  We denote the conjugate partition
  $
  \lambda^{*}=(s_{1}^{b_{1}}, \dotsc, s_{t}^{b_{t}}) ,
  $
  so that
  \(
  \mu^{*}
  =
  \big((s_{1}+1)^{b_{1}}, \dotsc, (s_{t}+1)^{b_{t}}\big)\).
  We fix a basis on \(\kk^{N_{\delta_{p}}}\). The flag variety \(\Flag_{(s_1+1,\dotsc,s_{t}+1)}(\kk^{N_{\delta_{p}}})\) is a closed subvariety of
  \[
    \Grass(s_{1}+1, \kk^{N_{\delta_{p}}})
    \times
    \dotsb
    \times
    \Grass(s_{t}+1, \kk^{N_{\delta_{p}}}),
  \]
  whose coordinates are given by the Plücker coordinates on each factor (defined via the fixed basis).
  The sections of the line bundle \(\L_{\mu} (\kk^{N_{\delta_{p}}})\) are polynomials of multi-degree \((b_{1}, \dotsc, b_{t})\) in the Plücker coordinates (restricted to \(\Flag_{(s_1+1,\dotsc,s_{t}+1)}(\kk^{N_{\delta_{p}}})\)).
  Let
  \[
    s
    =
    (p_{1,1}\dotsm p_{1,b_{1}})
    \times
    \dotsb
    \times
    (p_{t,1}\dotsm p_{t,b_{t}})
  \]
  be a section of \(\L_{\mu}(\kk^{N_{\delta_{p}}})\) associated to a monomial of multi-degree \((b_{1}, \dotsc, b_{t})\), where \(p_{i,j}\) is a Plücker coordinate on \(\Grass(s_{i}+1,\kk^{N_{\delta_{p}}})\). Using e.g. (the proof of) Lemma \ref{lemma: global}, one sees that \(\Theta_{\aa^{p}}^{*}(s)\) is a polynomial of multi-degree \((b_{1}, \dotsc, b_{t})\) in the Plücker coordinates on \(\Grass(s_{1}, TM) \times\dotsb\times \Grass(s_{t}, TM)\) (restricted to \(\Flag_{s}(TM)\)), and it takes values in the line bundle
  \[
    \pi_{M}^{*}
    \O_{M}
    \left(\big(b_{1}(s_{1}+1)+\dotsb+b_{t}(s_{t}+1)\big)(\varepsilon_{p}+\delta_{p})\right)
    =
    \pi_{M}^{*}
    \O_{M}
    \left((\abs{\lambda}+\lambda_{1})(\varepsilon_{p}+\delta_{p})\right).
  \]
  It is therefore a section of \(\L_{\lambda}(TM) \otimes \pi_{M}^{*}\O_{M}\left((\abs{\lambda}+\lambda_{1})(\varepsilon_{p}+\delta_{p})\right)\).
  To deduce the equality between the line bundles, it is enough to observe that \(\L_{\mu} (\kk^{N_{\delta_{p}}})\) is globally generated (it is in fact very ample).
\end{proof}

Recall that we introduced in Sect.~\ref{se:overview} the morphism
\[
  \varPsi
  \colon
  \left\vert
  \begin{array}{ccc}
    S^{\circ}
    \times
    \Flag_{s}(TM)
    & \longrightarrow 
    &
    \FF
    \times 
    \P^{N}
    \\
    \left(
      \aa^{\bigcdot}
      ,
      (x,F)
    \right)
    &
    \longmapsto
    &
    \left(\Theta(\aa^{\bigcdot},(x,F)), [\mathbi{\xi}(x)^{r}]\right)
    \\
  \end{array}
\right.,
\]
where we have denoted 
\[
  \FF
  =
  \Flag_{(s_{1}+1, \dotsc, s_{t}+1)}(\kk^{N_{\delta_{1}}}) \times \dotsb \times \Flag_{(s_{1}+1, \dotsc, s_{t}+1)}(\kk^{N_{\delta_{c}}}).
\]
The restriction of the morphism \(\varPsi\) to the flag bundle \( \Flag_{s}(T_{\X/S^{\circ}}) \) factors through the universal family
\[
  \mathcal{Y}
  =
  \Set*{
    \left(F^{1},\dotsc,F^{c};Z\right) \in \FF \times \P^{N}
    \suchthat
    \nu_{\delta_{i}}(Z)\perp F^{i}
  }.
\]
The following commutative diagram sums up the situation.
\[
  \xymatrix{
    \Flag_{s}(T_{\X/{S^{\circ}}})
    \ar[rr]^{\varPsi}
    \ar[rrd]^{\Theta}
    \ar[d]
&
&
\mathcal{Y}
\ar[r]
\ar[d]^{\rho}
&
\P^{N}
\\
S^{\circ}
&
&
\mathbi{F}
}
\]
As soon as \((1+k)c \geq N \), the first projection \(\rho\) is generically finite onto its image, so that the pull-back by  \(\rho\) of the very ample line bundle \(L_{\mu}\) on \(\FF\) is big and nef. Using Lemma \ref{lemma: pull-back}, one has for any \(m \geq 1\)
\begin{equation}
  \label{eq:pullback1}
  \varPsi^{*}
  (L_{m\mu}\boxtimes \O_{\P^{N}}(-1))_{| \mathcal{Y}}
  =
  \L_{cm\lambda}(T_{\X/S^{\circ}})\otimes\O_{\X}\big(cm(\lambda_{1}+\abs{\lambda})(\abs{\mathbi{\delta}}+\abs{\mathbi{\varepsilon}})-r\big).
\end{equation}
For \(m \gg 1\), the base locus of the line bundle bundle 
\(
(L_{m\mu}\boxtimes \O_{\P^{N}}(-1))_{| \mathcal{Y}}
\)
is nothing but the augmented base locus of \(\rho^{*}L_{\mu}\). By Nakamaye's theorem, this is the exceptional locus  \( \FF_{\infty} \bydef \Exc(\rho)\) of the morphism \( \rho \). Regrettably, this theorem does not provide an explicit bound for the number \(m\). For our purpose, it is enough to obtain a value of \(m\) so that the base locus \( \Bs\left((L_{m\mu}\boxtimes \O_{\P^{N}}(-1))_{| \mathcal{Y}}\right) \) is included in the exceptional locus \( \FF_{\infty} \).

This technical work on the effectivity of the value \(m\) was performed by Deng in~\cite{deng2017diverio}, by treating each natural stratum of \(\P^{N}\) at a time. More precisely,
denote for \(I \subsetneq \{0,\dotsc, N\}\)
\[
  \P(\kk_{I})
  \bydef
  \{
    (z_{0},\dotsc, z_{N})
    \in
    \P^{N}
    \
    |
    \
    \forall i \in I,
    \ z_{i}=0
  \}.
\]
Denote 
\(
\mathcal{Y}(I)
\bydef 
\mathcal{Y} \cap \big(\FF \times \P(\kk_{I})\big)
\),
as well as  \(\FF_{\infty}(I)\) the exceptional locus of the first projection \(\rho\) restricted to \(\FF \times \P(\kk_{I})\).
An easy corollary of a result in~\cite[Theorem 2.10]{deng2017diverio} yields the following
\begin{proposition}
  \label{prop: naka}
  For any \( I \subsetneq \{0, \dotsc, N\} \) and for any integer \(m  \geq \frac{\prod_{j=1}^{c} \delta_{j}^{k+1}}{\underset{1 \leq i \leq c}{\min}\delta_{i}} \), we have the inclusion
  \begin{align*}
    \rho\left(
      \Bs(L_{m\mu}
      \boxtimes
      \O_{\P^{N}}(-1)_{| \mathcal{Y}(I)})
    \right)
    \subset
    \FF_{\infty}(I),
  \end{align*}
  where \(\mu\) is any partition with jump sequence \((s_{1}+1, \dotsc, s_{t}+1)\).
\end{proposition}
\begin{proof}
  Denote \(\GG\bydef\Grass(k+1, \kk^{N_{\delta_{1}}})\times \dotsb \times \Grass(k+1, \kk^{N_{\delta_{c}}})\). One has the commutative diagram for any \(I\)
  \begin{align*}
    \xymatrix{
      \FF
      \ar[d]^{q}
   &
   &
   \FF\times \P(\kk_{I})
   \ar[ll]^{\rho}
   \ar[d]^{q \times \Id}
   \\
   \GG
   &
   &
   \GG\times \P(\kk_{I})
   \ar[ll]^{\rho^{*}}
 }
\end{align*}
where \(q\) is the natural projection from \(\FF\) to \(\GG\) obtained by sending on each factor a flag \((F_{1} \supsetneq \dotsb \supsetneq F_{t})\) to its first vector space \(F_{1}\).
Recall that we defined, for \( m \in \N_{\geq 1} \), the very ample line bundle on \(\GG\)
\[
  L_{m^{k+1}}
  \bydef
  \O_{\Grass(k+1, \kk^{N_{\delta_{1}}})}(m)
  \boxtimes
  \dotsb
  \boxtimes
  \O_{\Grass(k+1, \kk^{N_{\delta_{c}}})}(m).
\]
Denote \(\mathcal{Y}^{*}(I)\bydef(q \times \Id)(\mathcal{Y}(I))\).
A theorem in~\cite[Theorem 2.10]{deng2017diverio} asserts that, if
\(m \geq \frac{\prod_{j=1}^{c} \delta_{j}^{k+1}}{\underset{1 \leq i \leq c}{\min}\delta_{i}}\),
one has the following inclusion
\[
  \Bs(
  L_{m^{k+1}}
  \boxtimes
  \O_{\P^{N}}(-1)_{| \mathcal{Y^{*}}(I)})
  \subset
  (\rho^{*})^{-1}
  \big(
    \GG_{\infty}(I)
  \big),
\]
where
\(
\GG_{\infty}(I)
\)
is the exceptional locus of  \(\rho^{*}\). By pull-back, and by the very definitions of the line bundles \(L_{m^{k+1}}\) and \(L_{m\mu}\), one infers that
\[
  \Bs(L_{m\mu}
  \boxtimes
  \O_{\P^{N}}(-1)_{| \mathcal{Y}(I)})
  \subset
  \rho^{-1}(\FF_{\infty}(I)),
\]
which concludes the proof.
\end{proof}

\subsection{Avoiding the exceptional locus}
\label{se:avoiding the exceptional locus}
For technical reasons, we now introduce a stratification of the space \( M \) according to coordinate hyperplanes:
\begin{definition}
  For \(I \subsetneq \Set{0,\dotsc,N}\),
  denote
  \(D_{I}\bydef\bigcap_{i \in I} D_{i}\),
  where
  \(D_{i}\bydef\{\xi_{i} = 0 \}\),
  as well as
  \[
    M_{I}
    \bydef
    D_{I} \setminus \bigcap_{i \notin I} D_{i}.
  \]
  If \(\abs{I}=N-k_{0}\), \(\dim  M_{I}=k_{0}\),  and the subspaces \(M_{I}\) stratify \(M\).
\end{definition}

For a generic choice of parameter \(\aa^{\bigcdot}\), the restriction of the morphism \(\Theta(\aa^{\bigcdot},\cdot)\) to a positive dimensional stratum avoids the exceptional locus \(\FF_{\infty}(I)\):
\begin{proposition}
  \label{prop: exc}
  If \(\delta_{1},\dotsc, \delta_{c}\) are \(\geq N+k(N-k)\), up to shrinking \(S^{\circ}\), one can assume that for each \(I \subsetneq \{0,\dotsc, N\}\) with \(\abs{I} < N\),
  \[
    \Theta\left(
      S^{\circ}
      \times
      \Flag_{s}(TM_{| M_{I}})
    \right)
    \cap
    \FF_{\infty}(I)
    =
    \emptyset.
  \]

\end{proposition}
\begin{proof}
  Recall that \(\GG=\Grass(k+1, \kk^{N_{\delta_{1}}})\times \dotsb \times \Grass(k+1, \kk^{N_{\delta_{c}}})\). One has the commutative diagram
  \begin{align*}
    \xymatrix{
      S^{\circ} \times \Flag_{s}(TM_{| M_{I}})
      \ar[rrr]^{\Theta}
      \ar[d]^{\Id \times p}
    &
    &
    &
    \FF
    \ar[d]^{q}
    &
    &
    \FF\times \P(\kk_{I})
    \ar[ll]^{\rho}
    \ar[d]^{q \times \Id}
    \\
    S^{\circ} \times \Grass(k, TM_{| M_{I}})
    \ar[rrr]^{\Theta}
    &
    &
    &
    \GG
    &
    &
    \GG\times \P(\kk_{I})
    \ar[ll]^{\rho^{*}}
  }
\end{align*}
where the map \(p\) is the projection given pointwise by \( \big(x, F=(F_{1}\supsetneq \dotsb \supsetneq F_{t})\big) \mapsto (x,F_{1}) \), and the map \(q\) is the projection constructed in a similar fashion. Accordingly, one sees that it is enough to prove the following: up to shrinking \(S^{\circ}\),  for each \(I \subsetneq \{0,\dotsc, N\}\) with \(\abs{I} < N\),
\[
  \Theta\left(S^{\circ}
    \times
    \Grass(k+1,TM_{| M_{I}})
  \right)
  \cap
  \GG_{\infty}(I)
  =
  \emptyset,
\]
where \(\GG_{\infty}(I)\) is the exceptional locus of the first projection \(\rho^{*}\) (restricted to \(\GG \times \P(\kk_{I})\)).
To this end, it is enough to prove the following inequality
\[
  \dim
  \big(
    (S^{\circ} \times \Grass(k, TM_{| M_{I}}))
    \cap
    \Theta^{-1}(\GG_{\infty}(I))
  \big)
  <
  \dim S^{\circ}.
\]
(Note that, here,  \(\Theta\) is implicitely the restriction of \(\Theta\) to the stratum \(\Grass(k, TM_{|M_{I}})\)), as indicated by the commutative diagram above).

Fix \(\eta=(x, [v_{1}\wedge \dotsb \wedge v_{k}]) \in \Grass(k, TM_{| M_{I}})\), and denote \(\Theta_{\eta}\bydef \Theta(\cdot, \eta)\).
Note that if one proves the following inequality

\[
  \dim
  \big(
    \Theta_{\eta}^{-1}(
    \GG_{\infty}(I)
    )
  \big)
  <
  \dim S^{\circ}
  -
  \dim \left(\Grass(k, TM_{| M_{I}})\right),
\]
the lemma will follow, since then
\begin{align*}
&
\dim
\left(
  (S^{\circ}
  \times
  \Grass(k, TM_{| M_{I}}))
  \cap
  \Theta^{-1}(\GG_{\infty}(I))
\right)
\\
&
\leq
\dim \left(\Grass(k, TM_{| M_{I}})\right)
+
\underset{\eta }{\max}
\dim
\left(
  \Theta_{\eta}^{-1}(
  \GG_{\infty}(I)
  )
\right)
\\
&
<
\dim S^{\circ}.
\end{align*}

In order to study
\(\dim \big( \Theta_{\eta}^{-1}( \GG_{\infty}(I)) \big)\),
recall that  
\[
  \Theta_{\eta}(\aa^{\bigcdot})=\Theta_{\eta}(\aa^{1}) \times \dotsb \times \Theta_{\eta}(\aa^{c}) ,
\]
where \(\Theta_{\eta}(\aa^{p})\) is the \((k+1)\)-plane in \(\kk^{N_{\delta_{p}}}\) spanned by the components of the vector
\[
  \varphi_{\eta}(\aa^{p})
  \bydef
  \left(\big(\alpha_{J}(\aa^{p}, x)\big)_{J}, \big(\theta_{J}(\aa^{p},v_{1})\big)_{J}, \dotsc, \big(\theta_{J}(\aa^{p},v_{k})\big)_{J}\right).
\] 
One can see this vector as belonging to \(H^{0}(\P^{N}, \O_{\P^{N}}(\delta_{p}))^{\times (k+1)}\), and since \(x \in M_{I}\), one rather composes the linear map \(\varphi_{\eta}\) by the product of the natural projection
\[
  H^{0}(\P^{N}, \O_{\P^{N}}(\delta_{p}))
  \to
  H^{0}(\P(\kk_{I}), \O_{\P(\kk_{I})}(\delta_{p})).
\]
Accordingly, the vector \(\varphi_{\eta}(\aa^{p})\) belongs to \( H^{0}(\P(\kk_{I}), \O_{\P(\kk_{I})}(\delta_{p}))^{\times k+1} \).
Denote
\[
  \overset{\sim}{\GG}_{\infty}(I)
  \bydef
  H^{0}(\P(\kk_{I}), \O_{\P(\kk_{I})}(\delta_{1}))^{\times (k+1)}
  \times
  \dotsb
  \times
  H^{0}(\P(\kk_{I}), \O_{\P(\kk_{I})}(\delta_{c}))^{\times (k+1)}
\]   
and form 
\[
  \varPhi_{\eta}(\aa^{\bigcdot})
  \bydef
  \varphi_{\eta}(\aa^{1})
  \times
  \dotsb 
  \times 
  \varphi_{\eta}(\aa^{p})
  \colon S \to \overset{\sim}{\GG}_{\infty}(I).
\]
With this point of view, one sees that for \(Z \in \P({\kk_{I}})\) and \(\aa^{\bigcdot} \in S^{\circ}\), we have an equivalence between the vanishing of  \(\varPhi_{\eta}(\aa^{\bigcdot})(Z)\) and the conditions \(\Big(\nu_{\delta_{p}}(Z)\perp \Theta_{\eta}(\aa^{p}) \ \forall 1 \leq p \leq c \Big)\). (Recall that  \(\nu_{\delta_{p}}\) denotes the Veronese embedding of \(\P^{N}\) in \(\P^{N_{\delta_{p}}-1}\)).
This leads to define the analogue universal family
\[
  \overset{\sim}{\mathcal{Y}}(I)
  \bydef
  \Set*{\left((P_{i,j})_{i,j}, Z\right) \in \overset{\sim}{\GG}(I)\times \P(\kk_{I})
    \suchthat 
    P_{i,j}(Z)=0 \ 
    \forall 
    \
    i, j
  } 
\]    
with associated first projection \(\overset{\sim}{\rho}\). If \(\overset{\sim}{\GG}_{\infty}(I)\) is the exceptional locus of \(\overset{\sim}{\rho}\), one deduces from the above that for \( \aa^{\bigcdot} \in S^{\circ} \)
\[
  \Theta_{\eta}(\aa^{\bigcdot}) \in \GG_{\infty}(I) 
  \ 
  \text{if and only if}
  \
  \varPhi_{\eta}(\aa^{\bigcdot}) \in \overset{\sim}{\GG}_{\infty}(I).
\]
This gives in turn that
\begin{align*}
  \dim \big( \Theta_{\eta}^{-1}( \GG_{\infty}(I)) \big)
&
=
\dim \big( \varPhi_{\eta}^{-1}( \overset{\sim}{\GG}_{\infty}(I)) \big)
\\
&
=
\dim(S)-\rank(\varPhi_{\eta})+\dim\big(\overset{\sim}{\GG}_{\infty}(I)\big).
\end{align*}
It remains to evaluate the rank of \(\varPhi_{\eta}\). Suppose for the moment that \(\varPhi_{\eta}\) is surjective. Therefore, one gets the equality
\[
  \dim \big( \Theta_{\eta}^{-1}( \GG_{\infty}(I)) \big)
  =
  \dim(S)-\codim(\overset{\sim}{\GG}_{\infty}(I), \overset{\sim}{\GG}(I)).
\]
The quantity \(\codim(\overset{\sim}{\GG}_{\infty}(I), \overset{\sim}{\GG}(I))\) was studied in~\cite[Cor 3.3]{BD2018} as a corollary of a result in~\cite{Benoit}, and one has that
\[
  \codim(\overset{\sim}{\GG}_{\infty}(I), \overset{\sim}{\GG}(I))
  >
  \underset{1 \leq p \leq c}{\min} (\delta_{p}).
\]
Thus, if the \(\delta_{p}\)'s are chosen greater than \(N+k(N-k)=\dim \left( \Grass(k, TM) \right) \geq \dim \left(\Grass(k, TM_{| M_{I}})\right)\), the lemma follows.

To complete the proof, it remains to show that \(\varPhi_{\eta}\) is surjective. To this end, it is enough to show that 
\[
  \varphi_{\eta}\colon S_{\varepsilon_{p}, \delta_{p}} \to H^{0}(\P(\kk_{I}), \O_{\P(\kk_{I})}(\delta_{p}))^{\times (k+1)} 
\]
is surjective for \(1 \leq p \leq c\).
The matrix associated to the map \(\varphi_{\eta}\) can be represented by blocks
\begin{equation*}
  \renewcommand*{\arraystretch}{2}
  \def\mast{{\renewcommand*{\arraystretch}{.8}\begin{bmatrix}\ast&\ast\\\ast&\ast\end{bmatrix}}}
  \def\zero{{\renewcommand*{\arraystretch}{.8}\begin{bmatrix}0&0\\0&0\end{bmatrix}}}
  \begin{pmatrix}
    \mast&\zero&\dotso&\zero\\
    \zero&\ddots&\ddots&\vdots\\
    \vdots&\ddots&\ddots&\zero\\
    \zero&\dotso&\zero&\mast\\
  \end{pmatrix}
\end{equation*}
where each block is a matrix with \(k+1\) rows and \(\dim(H^{0}(M, \O_{M}(\varepsilon)))\) columns, and the number of such blocks on a column is equal to \(\dim H^{0}(\P(\kk_{I}), \O_{\P(\kk_{I})}(\delta_{p}))\).
To see this, one considers
\(S\)
as
\((H^{0}(M,\O_{M}(\varepsilon)))_{\abs{J}=\delta_{p}}\)
and
\(H^{0}(\P(\kk_{I}), \O_{\P(\kk_{I})}(\delta_{p}))^{\times (k+1)} \)
as
\((\kk^{k+1})_{[J]\cap I=\emptyset, \abs{J}=\delta_{p}}\), where \( [J] \) is the support of the index \(J\).
Each vertical entity in the previous block representation corresponds to the image of \(\{0\}\times \dotsb \times H^{0}(M,\O_{M}(\varepsilon))\times \{0\}\times \dotsb \times \{0\}\), where the non zero-term corresponds to a certain index \(J\).

Take an index \(J\), with \([J] \cap I = \emptyset\). Under the hypothesis \(\varepsilon_{p} \geq 1\), every element of \(\Omega_{x}M\) can be written under the form \(\diff a(x,\cdot)\) where \(a \in H^{0}(M,\O_{M}(\varepsilon))\) satisfies \(a(x)=0\).
Letting \(\aa^{p}\) be such that \(a_{J}(x)=0\), one has for every \(1\leq i \leq k\):
\[
  \theta_{J}(\aa^{p}, x, v_{i})
  =
  \mathbi{\xi}^{J}(x) \diff a_{J}(x, v_{i}).
\]
From the previous observation, for every \(1 \leq i \leq k\), one can find \(\mathbf{a_{i}^{p}}\) such that
\[
  \theta_{J}(\mathbf{a_{i}^{p}}, x, v_{j})=\delta_{i}^{j}
\]
for every \(1 \leq j \leq k\), where \(\delta_{i}^{j}\) is the Kronecker index.
As one can always find
\( 
a \in H^{0}(M,\O_{M}(\varepsilon)) 
\)
such that \(a(x)\neq 0\), one deduces that the rank of the \(J\)th block is maximal, i.e. equal to \(k+1\).
This reasoning works for every index \(J\), with \([J] \cap I = \emptyset\), and shows that the rank of the matrix is at least equal to \((k+1) \abs{\Set*{J \suchthat [J] \cap I = \emptyset}}=\dim H^{0}(\P(\kk_{I}), \O_{\P(\kk_{I})}(\delta_{p}))^{\times (k+1)} \): the surjectivity of the map \(\varPhi_{\eta}\) follows, and the proof is complete.
\end{proof}

\subsection{Pulling-back the positivity}
We are now in position to finish the proof of Main Theorem. We first prove the following more precise version. 
\begin{theorem}
  \label{thm: main theorem}
  Let \(\lambda\) a partition of jump sequence \(s=(k=s_{1}>\dotsb>s_{t})\), and let \(M\) be a variety of dimension \(N\) equipped with a very ample line bundle \(\O_{M}(1)\).

  Let us choose:
  \begin{itemize}
    \item
      \(c\) such that \(c(s_{1}+1) \geq N \Leftrightarrow c(k+1)\geq N\);
    \item
      \(\mathbi{\delta}=(\delta_{1},\dotsc,\delta_{c})\) such that for each \(1 \leq i \leq c\), \(\delta_{i} \geq N+k(N-k)\);
    \item
      \( m \) such that \( m \geq \frac{\prod_{j=1}^{c} \delta_{j}^{k+1}}{\underset{1 \leq i \leq c}{\min}\delta_{i}}\);
    \item
      \(\mathbi{\varepsilon}=(\varepsilon_{1},\dotsc,\varepsilon_{c})\) such that for each \(1 \leq i \leq c\), \(\varepsilon_{i} \geq 1\);
    \item
      \(r\) such that \(r>m(\abs{\lambda}+\lambda_{1})(\abs{\mathbi{\varepsilon}}+\abs{\mathbi{\delta}})\).
  \end{itemize}

  Let us denote for each \(1 \leq i \leq c\), \(d_{i}=\delta_{i}(r+1)+\varepsilon_{i}\).
  Then a complete intersection \(X\bydef H_{1}\cap\dotsb\cap H_{c}\) of \(c\) generic hypersurfaces \(H_{i} \in |H^{0}(M, \O_{M}(d_{i}))|\) has \(\S^{\lambda}\Omega_{X}\) ample.
\end{theorem}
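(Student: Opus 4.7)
The plan is to combine the explicit morphism $\boldsymbol{\varPsi_{I}}$ built in Section~\ref{se:proof} with the effective Nakayame bound \eqref{eqref: base locus} and the avoidance lemma (Lemma~\ref{final lemma}), working stratum by stratum on $\Flag_{s}TM$ to establish a nefness estimate, and then to upgrade nefness to ampleness by the relative ampleness argument sketched in~\ref{sse:pulling back positivity}. First I fix the parameters as prescribed in the theorem. The condition $\delta_{i}\geq N+k(N-k)$ simultaneously secures the hypotheses of Proposition~\ref{prop: open set} (so $\bold{S}^{\circ}$ is non-empty and $\boldsymbol{\varPsi_{I}}$ is well-defined on it) and of Lemma~\ref{final lemma} (so a non-empty open $\bold{V}\subset \bold{S}^{\circ}$ exists on which $\boldsymbol{\varPsi_{I}}(\bold{V}\times \Flag_{s}TM_{|M_{I}})\cap \boldsymbol{\rho_{I}}^{-1}(\bold{G}_{\infty}(I))=\emptyset$ for every $|I|<N$). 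The bound $m_{i}\geq (\prod_{j}\delta_{j}^{k+1})/\delta_{i}$ makes the effective Nakayame estimate applicable, while the choice of $r$ guarantees $u\bydef r-|\lambda_{+}^{*}|\sum_{p=1}^{c}m_{p}(\varepsilon_{p}+\delta_{p})>0$.

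The core step is to prove that for any $\bold{a}^{\bigcdot}\in \bold{V}$ the line bundle
\[
\mathcal{Q}_{\bold{a}^{\bigcdot}}\bydef \mathcal{L}_{\lambda}TH_{\bold{a}^{\bigcdot}}^{\otimes |\bold{m}(\boldsymbol{\delta})|}\otimes \pi_{H_{\bold{a}^{\bigcdot}}}^{*}\O_{H_{\bold{a}^{\bigcdot}}}(-u)
\]
is nef. I would test this on an arbitrary irreducible curve $\mathcal{C}\subset \Flag_{s}TH_{\bold{a}^{\bigcdot}}$ by picking the unique stratum $M_{I}$ such that the open part $\mathcal{C}_{|I}\bydef \mathcal{C}\cap \Flag_{s}TM_{|M_{I}}$ is dense in $\mathcal{C}$. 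If $\Dim M_{I}=0$, then $\mathcal{C}$ lies in a single fiber of $\pi_{H_{\bold{a}^{\bigcdot}}}$, on which $\mathcal{L}_{\lambda}TH_{\bold{a}^{\bigcdot}}$ restricts to the ample line bundle $\mathcal{L}_{\lambda}$ on a flag variety of a vector space, hence $\mathcal{C}\cdot \mathcal{Q}_{\bold{a}^{\bigcdot}}\geq 0$. Otherwise Lemma~\ref{final lemma} forces $\mathcal{C}_{|I}\not\subset \boldsymbol{\varPsi_{I}}^{-1}(\boldsymbol{\rho_{I}}^{-1}(\bold{G}_{\infty}(I)))$, and~\eqref{eqref: base locus} then guarantees that $\boldsymbol{\varPsi_{I}}(\mathcal{C}_{|I})$ avoids the base locus of $\O_{\bold{G}}(\bold{m}(\boldsymbol{\delta}))\boxtimes \O_{\P^{N}}(-1)_{|\boldsymbol{\mathcal{Y}}(I)}$; picking a section $\sigma$ not vanishing identically on $\boldsymbol{\varPsi_{I}}(\mathcal{C}_{|I})$ and invoking the pullback identity~\eqref{eq:pullback}, I obtain a section of (the second-projection pullback of) $\mathcal{Q}_{\bold{a}^{\bigcdot}}$ that does not vanish identically on $\mathcal{C}$, whence $\mathcal{C}\cdot \mathcal{Q}_{\bold{a}^{\bigcdot}}\geq 0$.

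Once $\mathcal{Q}_{\bold{a}^{\bigcdot}}$ is nef, the identity $\mathcal{L}_{\lambda}TH_{\bold{a}^{\bigcdot}}^{\otimes|\bold{m}|}=\mathcal{Q}_{\bold{a}^{\bigcdot}}\otimes \pi_{H_{\bold{a}^{\bigcdot}}}^{*}\O_{H_{\bold{a}^{\bigcdot}}}(u)$ presents $\mathcal{L}_{\lambda}TH_{\bold{a}^{\bigcdot}}^{\otimes|\bold{m}|}$ as the tensor product of a nef line bundle and the pullback of an ample line bundle; since its restriction to every fiber of $\pi_{H_{\bold{a}^{\bigcdot}}}$ is the ample line bundle $\mathcal{L}_{\lambda}T_{x}H_{\bold{a}^{\bigcdot}}^{\otimes |\bold{m}|}$ on a flag variety, the standard criterion (nef $+$ pullback of ample $+$ ample on fibers $\Rightarrow$ ample) yields that $\mathcal{L}_{\lambda}TH_{\bold{a}^{\bigcdot}}^{\otimes |\bold{m}|}$, and thus $\mathcal{L}_{\lambda}TH_{\bold{a}^{\bigcdot}}$, is ample. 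Proposition~\ref{prop:LN} then gives ampleness of $S^{\lambda}\Omega_{H_{\bold{a}^{\bigcdot}}}$, and openness of ampleness in the smooth projective family $\mathcal{X}\to \bold{S}$ finally delivers the conclusion for a generic complete intersection of degrees $d_{i}=\delta_{i}(r+1)+\varepsilon_{i}$.

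The main obstacle will be the stratum-by-stratum argument: $\boldsymbol{\varPsi_{I}}$ is defined only above each $M_{I}$ and not globally on $\Flag_{s}TM$, so the non-vanishing sections furnished by Lemma~\ref{final lemma} and the effective Nakayame bound cannot simply be assembled into global sections of $\mathcal{Q}_{\bold{a}^{\bigcdot}}$. The key point that bypasses this is that nefness is a curve-by-curve test; each curve has a unique generic stratum, so one only needs a section on that particular $\boldsymbol{\mathcal{Y}}(I)$. A second delicate matter is the bookkeeping in~\eqref{eq:pullback}, which must exactly match the twist $-u$ appearing in $\mathcal{Q}_{\bold{a}^{\bigcdot}}$ with the $\O_{\P^{N}}(-1)$ factor on $\boldsymbol{\mathcal{Y}}(I)$ up to the prescribed corrections coming from $|\lambda_{+}^{*}|$, $\varepsilon_{p}$ and $\delta_{p}$; this is precisely why $r$ is required to exceed $|\lambda_{+}^{*}|\sum_{p}m_{p}(\varepsilon_{p}+\delta_{p})$.
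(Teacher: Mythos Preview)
Your proposal is correct and follows essentially the same approach as the paper's own proof: the stratum-by-stratum nefness test on curves via Lemma~\ref{final lemma} and the effective Nakayame bound~\eqref{eqref: base locus}, the pullback identity~\eqref{eq:pullback}, and the final upgrade from nef to ample via relative ampleness on the fibers together with Proposition~\ref{prop:LN} and openness of ampleness. Your observation that the non-global nature of $\boldsymbol{\varPsi_{I}}$ is harmless because nefness is tested curve by curve is exactly the point the paper uses, and your bookkeeping of the twist $-u$ matches the paper's computation.
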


\begin{proof}
  By the very hypothesis, Propositions~\ref{prop: global}, \ref{prop: naka} and \ref{prop: exc} apply.
  Denote \(u=r-m(\abs{\lambda}+\lambda_{1})(\abs{\mathbi{\varepsilon}}+\abs{\mathbi{\delta}}) > 0\).
  One first proves that for  \(\aa^{\bigcdot} \in S^{\circ}\), the line bundle
  \[
    \L_{mc\lambda}(TX_{\aa^{\bigcdot}}) \otimes \pi_{\aa^{\bigcdot}}^{*}\O_{X_{\aa^{\bigcdot}}}(-u)
  \]
  is nef. (Recall that \( \pi_{\aa^{\bigcdot}}: \Flag_{s}(TX_{\aa^{\bigcdot}}) \to X_{\aa^{\bigcdot}} \) is the canonical projection onto  the complete intersection \(X_{\aa^{\bigcdot}}= \pi_{\X}^{-1}(\aa^{\bigcdot})\)).

  Take \(\mathcal{C} \subset \Flag_{s}TX_{\aa^{\bigcdot}} \subset \Flag_{s}TM\) any irreducible curve. As \((\Flag_{s}TM_{| M_{I}})_{I \subsetneq \{0,\dotsc, N\}}\) stratifies \(\Flag_{s}(TM)\), there exists a unique set \(I\) such that \(\Flag_{s}TM_{| M_{I}}\) contains an open dense subset of \(\mathcal{C}\). If \(\dim(M_{I})=0\), then the irreducible curve \(\mathcal{C}\) is contained in a fiber of
  \(\Flag_{s}(TX_{\aa^{\bigcdot}}) \to X_{\aa^{\bigcdot}}\),
  say the fiber over a point \(x \in X_{\aa^{\bigcdot}}\). As \(\L_{\lambda}(T_{x}X_{\aa^{\bigcdot}})\) is ample, one indeed infers the inequality
  \[
    \mathcal{C}
    \cdot
    \L_{mc\lambda}(TX_{\aa^{\bigcdot}})
    \otimes
    \pi_{X_{\aa^{\bigcdot}}}^{*}
    \O_{X_{\aa^{\bigcdot}}}(-u)
    \geq
    0.
  \]
  Suppose now that \(\dim(M_{I})=k \geq 1\), and denote \(\mathcal{C}_{| I}\) the restriction of \(\mathcal{C}\) to \(\Flag_{s}TM_{| M_{I}}\). By Proposition~\ref{prop: exc}, one has
  \[
    \mathcal{C}_{| I}
    \not\subset
    {\varPsi_{I}}^{-1}
    \big(
      \rho_{I}^{-1}(\FF_{\infty}(I))
    \big),
  \]
  where \(\varPsi_{I}\) is the restriction of \(\varPsi\) to the stratum \(\Flag_{s}(TM_{|M_{I}})\), and \(\rho_{I}\) the restriction of \(\rho\) to \(\FF \times \P(\kk_{I})\). (Recall that \( \varPsi\circ \rho=\Theta \)).
  A fortiori, by Proposition~\ref{prop: naka}, one deduces that
  \[
    \mathcal{C}_{| I}
    \not\subset
    {\varPsi_{I}}^{-1}
    \big(
      \Bs(L_{m\mu}
      \boxtimes
      \O_{\P^{N}}(-1)_{| \mathcal{Y}(I)})
    \big).
  \]
  Therefore, there exists a section
  $\sigma \in H^{0}
  \big(
    \mathcal{Y}(I),
    L_{m\mu}
    \boxtimes
    \O_{\P^{N}}(-1)_{| \mathcal{Y}(I)}
  \big)$ such that
  \[
    \mathcal{C}_{| I} \not\subset ({\varPsi_{I}}^{*}\sigma=0).
  \]
  With Lemma~\ref{lemma: pull-back} (more precisely, with equation \eqref{eq:pullback1}), one sees that \(\varPsi_{I}^{*}\sigma\)  is a section of the line bundle (restricted to the stratum \(S^{\circ} \times \Flag_{s}(TM_{|M_{I}})\))
  \[
    \L_{cm\lambda}(T_{\X/S^{\circ}})
    \otimes\O_{\X}(-u),
  \]
  so that
  \(
  \mathcal{C}_{| I}
  \cdot
  \L_{cm\lambda}(T_{\X/S^{\circ}})
  \otimes\O_{\X}(-u)
  \geq
  0.
  \)
  Since \(\mathcal{C} \subset \Flag_{s}TX_{\aa^{\bigcdot}}\), it implies in turn that
  \[
    \mathcal{C}
    \cdot
    \L_{mc\lambda}(TX_{\aa^{\bigcdot}}) 
    \otimes 
    \pi_{\aa^{\bigcdot}}^{*}\O_{X_{\aa^{\bigcdot}}}(-u)
    \geq
    0.
  \]
  The announced nefness is proved.

  To conclude, one sees that the line bundle
  \(
  \left(\L_{\lambda}(TX_{\aa^{\bigcdot}})\right)^{ mc}
  =
  \L_{mc\lambda}(TX_{\aa^{\bigcdot}})
  \)
  is relatively ample with respect to
  \(\pi_{X_{\aa^{\bigcdot}}}\),
  and equal to the sum of a nef divisor and the pull-back by \(\pi_{\aa^{\bigcdot}}\) of an ample divisor.
  Accordingly, it is an ample line bundle, and so is
  \(\L_{\lambda}(TX_{\aa^{\bigcdot}})\).
  By Proposition~\ref{prop:LN}, one infers that the Schur power \(\S^{\lambda}\Omega_{X_{\aa^{\bigcdot}}}\) is ample on \(X_{\aa^{\bigcdot}}\), and the result now follows from the open property in family of ampleness.
\end{proof}

Note the specific constraint on the degrees that prevents from embracing every large enough degrees. We will now see how, following the idea of \og product coup\fg \ of Xie in~\cite{Xie2018}, it is possible to overcome this.
Let us state the key observation in~\cite[Observation 5.6]{Xie2018}
\begin{lemma}
  \label{lemma: uniform bound}
  For all positive integers \(d\geq 1\), every integer \(d_{0}\geq d(d+1)\) is a sum of non-negative multiples of \(d+1\) and \(d+2\).
\end{lemma}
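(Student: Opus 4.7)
The plan is to produce an explicit decomposition via Euclidean division, since the statement is essentially the classical Frobenius--Sylvester bound for the two coprime integers $d+1$ and $d+2$.

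First, I would perform the Euclidean division of $d_0$ by $d+1$: write $d_0 = q(d+1) + r$ with $0 \leq r \leq d$. The key identity is
\[
(q-r)(d+1) + r(d+2) \;=\; q(d+1) + r \;=\; d_0,
\]
so setting $a \bydef q - r$ and $b \bydef r$ realizes $d_0$ as $a(d+1) + b(d+2)$, provided $a \geq 0$.

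The only point to check is therefore the non-negativity $q \geq r$. Since $r \leq d$ by construction, it suffices to show $q \geq d$; but the hypothesis $d_0 \geq d(d+1)$ yields $q = \lfloor d_0/(d+1) \rfloor \geq d$, which closes the argument. There is no genuine obstacle here: this is just a specialization of the fact that the Frobenius number of two consecutive integers $d+1,d+2$ is $(d+1)(d+2)-(d+1)-(d+2) = d^2+d-1$, and $d(d+1) = d^2+d$ exceeds this conductor by $1$.
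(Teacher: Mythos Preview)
Your argument is correct: the Euclidean division $d_0 = q(d+1)+r$ with $0\le r\le d$ together with the identity $(q-r)(d+1)+r(d+2)=d_0$ works, and the bound $d_0\ge d(d+1)$ indeed forces $q\ge d\ge r$. The paper does not supply its own proof of this lemma but simply quotes it from \cite[Observation~5.6]{Xie2018}, so there is nothing further to compare; your explicit decomposition is exactly the kind of elementary verification one expects for this Frobenius-type bound.
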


Denote \(\delta=N+k(N-k)\),  fix \(\delta_{p}=\delta\) for every \(1\leq p \leq c\), and fix \(m=\delta^{c(k+1)-1}\). Fix also \(r=\big(2c(\abs{\lambda}+\lambda_{1})\delta^{c(k+1)}-1\big)\), so that
\(r>m(\abs{\lambda}+\lambda_{1})(\abs{\mathbi{\varepsilon}}+\abs{\mathbi{\delta}})\) for every choice of \(\varepsilon_{p} \in \{1,2\}\).
Finally, denote \(d=\delta(r+1)\).
We now prove our Main Theorem, namely that a complete intersection \(X\) of \(c\) generic hypersurfaces
\(H_{i} \in |H^{0}(M, \O_{M}(d_{i}))|\)
with \(d_{i} \geq (d+1)^{2}\) has \(\S^{\lambda}\Omega_{X}\) ample:
\begin{theorem}
  \label{thm: main theorem2}
  Let \(\lambda\) a partition of jump sequence \(s=(s_{1}\bydef k >\dotsb>s_{t})\), and let \(M\) be a variety of dimension \(N\) equipped with a very ample line bundle \(\O_{M}(1)\).
  A complete intersection \(X\bydef H_{1}\cap\dotsb\cap H_{c}\) of \(c\) generic hypersurfaces \(H_{i} \in |H^{0}(M, \O_{M}(d_{i}))|\) with $c \geq \frac{N}{k+1}$ and \(d_{i} \geq \left(1+ 2c(\abs{\lambda}+\lambda_{1})(N+k(N-k))^{c(k+1)+1}\right)^{2}\) has \(\S^{\lambda}\Omega_{X}\) ample.
\end{theorem}
\begin{proof}
  Fix \(d_{1}, \dotsc, d_{c}\) with \(d_{i} \geq (d+1)^{2} \geq d(d+1)\), and, using Lemma~\ref{lemma: uniform bound}, write \(d_{i}=p_{i}(d+1)+q_{i}(d+2)\), with \(p_{i}, q_{i} \in \N\). The idea is to consider, for \(1\leq i \leq c\), hypersurfaces of degree \(d_{i}\) defined by
  \[
    H_{i}
    =
    (h_{i,1}\dotsm h_{i,p_{i}})
    \times
    (h_{i,p_{i}+1}\dotsm h_{i,p_{i}+q_{i}})
  \]
  where \(h_{i,1}, \dotsc,  h_{i,p_{i}} \in H^{0}(M,\O_{M}(d+1))\)
  and
  \(h_{i,p_{i}+1}, \dotsc, h_{i,p_{i}+q_{i}} \in  H^{0}(M,\O_{M}(d+2))\).

  Let \(\big(h_{i,j}\big)_{1\leq i \leq c}^{1\leq j \leq p_{i}+q_{i}}\) be a generic choice of equations, and let 
  \[
    X=(H_{1}=0)\cap \dotsb \cap (H_{c}=0).
  \]
  If we restrict ourselves to regular points \(X_{reg}\) of \(X\), each pair of equations
  \[
    H_{i}(x)=0, \ \diff H_{i}((x,\cdot))=0
  \]
  decomposes as
  \[
    h_{i,k}(x)=0, \ \diff h_{i,k}((x,\cdot))=0
  \]
  for a unique  \(1\leq k \leq p_{i}+q_{i}\). Indeed, the point \(x\) can not belong to \((h_{i,k}=0)\cap(h_{i,k'}=0)\) for \(k\neq k'\) since this would imply that \(x\) is a singular point of \(X\).
  Accordingly, one can decompose \(\Flag_{s}TX_{reg}\) as the disjoint union
  \begin{align}
    \label{eq:decomposition}
    \bigsqcup_{1\leq k_{i} \leq p_{i}+q_{i}} \Flag_{s}(T(X_{k_{1}, \dotsc, k_{c}}\cap X_{reg}))
  \end{align}
  where \(X_{k_{1}, \dotsc, k_{c}}=(h_{1,k_{1}}=0)\cap \dotsb \cap (h_{c, k_{c}}=0)\).
  By Theorem \ref{thm: main theorem}, for a generic choice of equations, the complete intersection \(X_{k_{1}, \dotsc, k_{c}}\) is smooth and the line bundle
  \(\L_{\lambda}(TX_{k_{1}, \dotsc, k_{c}})\)
  is ample for every \(k_{1}, \dotsc, k_{c}\).

  Denote
  \(\X \to T\)
  the universal family of complete intersections of \(c\) hypersurfaces of degrees  \(d_{1}, \dotsc, d_{c}\),
  and denote \(\X^{reg}\)  the open subset whose fiber over \(t \in T\) is the regular points of the complete intersection \(X_{t}\) determined by \(t\).
  Consider the closure of the flag bundle of the relative tangent bundle of \(\X^{reg}/T\)
  and
  denote
  \(\pr_{1}\) (resp. \(\pr_{2}\))
  the restriction to 
  \[
    \overline{\Flag_{s}(T_{\X^{reg}/T})}  \subset T \times \Flag_{s}TM 
  \]
  of the first (resp. the second) projection.
  If the parameter \(t \in T\) defines a smooth complete intersection \(X_{t}\), then
  \[
    \pr_{1}^{-1}(t)
    =
    \Flag_{s}TX_{t},
  \]
  since the fiber \(\Flag_{s} TX_{t}\) is already closed.
  Consider now \(t_{0} \in T\) the parameter such that
  \[
    X_{t_{0}}=X=(H_{1}=0)\cap \dotsb \cap (H_{c}=0).
  \]
  In this situation, it follows from \eqref{eq:decomposition} that
  \[
    \pr_{1}^{-1}(t_{0})
    =
    \bigcup_{1\leq k_{i} \leq p_{i}+q_{i}} \Flag_{s}(TX_{k_{1}, \dotsc, k_{c}}).
  \]

  We can now conclude. Let \(L=\pr_{2}^{*}\L_{\lambda}(TM)\), and denote \(L_{t}\bydef L_{| \pr_{2}^{-1}(t)}\).
  For a parameter \(t \in T\) such that \(X_{t}\) is smooth,
  \[
    L_{t}=\L_{\lambda}(TX_{t}),
  \]
  whereas for the parameter \(t_{0}\) fixed above
  \[
    (L_{t_{0}})_{| \Flag_{s}TX_{k_{1}, \dotsc, k_{c}}}=\L_{\lambda}(TX_{k_{1}, \dotsc, k_{c}}).
  \]
  The very choice of the parameter \(t_{0}\) makes \(\L_{\lambda}(TX_{k_{1}, \dotsc, k_{c}})\) ample for every \(k_{1}, \dotsc, k_{c}\). Accordingly, the line bundle \(L_{t_{0}}\) is ample on every irreducible component of \(\pr_{1}^{-1}(t_{0})\): it is therefore ample. By openess of ampleness for the family \(\overline{\Flag_{s}(T_{\X^{reg}/T})} \to T\), \(L_{t}\) remains ample for \(t\) in a neighborhood of \(t_{0}\). Since \(\{t \in T \ | \ X_{t} \ \text{is smooth} \}\) is a dense open subset of \(T\), there exists \(t \in T\) such that \(\L_{\lambda}(TX_{t})\) is ample, which concludes the proof.
\end{proof}

\section{Applications in hyperbolicity}
\label{se: hyperbolicity}
A particular case of Theorem~\ref{thm: main theorem2} for \(\lambda=(1^{k})\) implies that a generic complete intersection \(X=H_{1} \cap \dotsb \cap H_{c}\) of codimension \(c\geq \frac{N}{1+k}\), with \(H_{i} \in \O_{M}(d_{i})\) of degree \(d_{i} \geq \big(1+ 2c(k+1)(N+k(N-k))^{c(k+1)+1}\big)^{2}\),  has the exterior power of its cotangent bundle \(\Ext^{k}\Omega_{X}\) ample. We will explain in the sequel what can be deduced of \(X\) in term of \(k\)-infinitesimal hyperbolicity.

\subsection*{\texorpdfstring{\(k\)}{k}-infinitesimal hyperbolicity}
We briefly recall the notion of \(k\)-infinitesimal hyperbolicity, and we refer to~\cite{Santa} for more details on the subject. Let \(X\) be a compact complex manifold of dimension \(N\), and let \(1\leq k \leq N\). Define the so-called \textsl{\(k\)-Kobayashi-Eisenman infinitesimal pseudometric} on all decomposable \(k\)-vectors \( (x, \xi=v_{1} \wedge \dotsb \wedge v_{k}) \) of \(\Ext^{k}TX\) as follows:
\[
  \mathbf{e}^{k}(x, \xi)
  =
  \inf
  \{
    \frac{1}{R} > 0
    \ | \
    \exists f\colon \B^{k} \to X, f(0)=x,
    f_{*}(
    \partial/\partial t_{1} \wedge \dotsb \wedge \partial/\partial t_{k}
    )
    =
    R \xi
  \},
\]
where \(\B^{k}\) is the unit ball in \(\C^{k}\) and \(R\in \R_{>0}\). Consider \(\Grass(k,TX) \subset \P(\Ext^{k}TX)\), and denote \(\O(-1)\) the restriction to \(\Grass(k,TX)\) of the tautological line bundle on \(\P(\Ext^{k}TX)\). Define a function \(H^{k}: \O(-1) \to \R_{+}\) by setting for
\(x \in X\) and \(\xi=v_{1}\wedge \dotsb \wedge v_{k}\):
\[
  H^{k}\big(((x,[\xi]), \xi)\big)
  =
  e^{k}\big((x, \xi)\big).
\]
We say that \(X\) is \emph{\(k\)-infinitesimally hyperbolic} if for any smooth metric \(\omega\) on \(\O(-1)\), there exists \(\varepsilon > 0\) such that
\[
  H^{k} \geq \varepsilon \omega
\]
on \( X \).
Note that for \(k=1\), being \(1\)-infinitesimally hyperbolic is equivalent to being hyperbolic in the sense of Kobayashi, which is also equivalent to the non-existence of entire curves in \(X\) (recall that \(X\) is compact).
Similarly, observe that for \(k \geq 1\), being \(k\)-hyperbolic implies in particular that every holomorphic map \(f\colon\C\times\B^{k-1}\to X\) must be degenerate in the sense that the jacobian matrix of \(f\) is never of maximal rank. Indeed, otherwise, take \(z\) such that \(df_{z}\) is injective. Without loss of generality, suppose \(z=0\), and consider
\[
  f_{R}\colon B^{k}
  \to
  X,
  \
  (z_{1}, z_{2}, \dotsc, z_{k})
  \mapsto
  f(Rz_{1}, z_{2}, \dotsc, z_{k}).
\]
Then, denoting \(\xi=f_{*}(\partial/\partial t_{1} \wedge \dotsb \wedge \partial/\partial t_{k}) \neq 0\), one has
\[
  (f_{R})_{*}(\partial/\partial t_{1} \wedge \dotsb \wedge \partial/\partial t_{k})
  =
  R \xi.
\]
Therefore, letting \(R \to \infty\), one sees that \((f(0), \xi)\) is a degenerate point for \(\mathbf{e}^{k}\), which contradicts the \(k\)-infinitesimal hyperbolicity.

As a generalization of a theorem due to Kobayashi (\cite{kob13}) asserting that ampleness of the cotangent bundle implies hyperbolicity, we have the following (which is a particular case of~\cite[Prop. 3.4]{Santa}):
\begin{proposition}[\cite{Santa}]
  Let \(X\) be a compact complex manifold, with \(\Ext^{k}\Omega_{X}\) ample. Then \(X\) is \(k\)-infinitesimally hyperbolic.
\end{proposition}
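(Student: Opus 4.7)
The plan is to run the classical Kobayashi-type Ahlfors--Schwarz argument on the Grassmannian bundle. The first step is a reformulation of the hypothesis: by Hartshorne's definition of ampleness recalled in Section~\ref{subsection: ampleness}, together with Proposition~\ref{prop:LN} applied to $\lambda = (1^{k})$, the ampleness of $\bigwedge^{k}\Omega_{X}$ is equivalent to the ampleness of the Plücker line bundle $\det U^{*} \to \Grass(k, TX)$, where $U$ is the tautological subbundle. Under the Plücker embedding $\Grass(k, TX) \hookrightarrow \P(\bigwedge^{k}TX)$, this Plücker bundle is precisely the restriction of $\O_{\P(\bigwedge^{k}TX)}(1)$, hence coincides with the dual of the line bundle $\O(-1)$ appearing in the statement. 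By Kodaira's theorem one may therefore fix a smooth Hermitian metric $h$ on $\det U^{*}$ whose Chern curvature $i\Theta_{h}$ is a strictly positive $(1,1)$-form on the compact manifold $\Grass(k, TX)$.

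The geometric heart of the proof is a canonical lift. Any holomorphic $f\colon \mathbb{B}_{k} \to X$ with $df$ injective at the origin lifts, on some neighborhood $V$ of $0$, to
\[
\tilde f\colon V \to \Grass(k, TX), \qquad z \mapsto \bigl(f(z),\; df_{z}(T_{z}\mathbb{B}_{k})\bigr).
\]
The isomorphism $df\colon TV \xrightarrow{\sim} \tilde f^{*}U$ identifies $\tilde f^{*}(\det U^{*}) \cong K_{V}$, so $\tilde f^{*}h$ becomes a smooth positive volume form $\beta$ on $V$ whose Ricci form is $\mathrm{Ric}(\beta) = \tilde f^{*} i\Theta_{h}$. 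Compactness of $\Grass(k, TX)$ together with strict positivity of $i\Theta_{h}$ then furnishes a uniform pointwise bound $\mathrm{Ric}(\beta)^{k} \geq A\, \beta$ with a constant $A > 0$ depending on $h$ but not on $f$.

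The final step applies the volume-form version of the Ahlfors--Schwarz lemma (Kobayashi, Cowen--Griffiths; see the treatment in \cite{Santa}), which under the above Ricci lower bound dominates $\beta$ pointwise by a constant multiple of the Poincaré volume form on $\mathbb{B}_{k}$. Evaluating at $z = 0$, and using that $f_{\ast}(\partial_{t_{1}}\wedge\dots\wedge \partial_{t_{k}})\vert_{0} = R \cdot v_{1}\wedge \dots \wedge v_{k}$ lives in the line $\det W = \O(-1)_{(x, W)}$ where $W = df_{0}(T_{0}\mathbb{B}_{k})$, the volume bound translates into
\[
R \cdot \|v_{1}\wedge \dots \wedge v_{k}\|_{h^{\vee}} \leq C,
\]
with $h^{\vee}$ the dual metric on $\O(-1)$. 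Taking the infimum over all admissible $f$ yields $\mathbf{e}^{k}(x, \xi) \geq \varepsilon\,\|\xi\|_{h^{\vee}}$ uniformly in the decomposable $k$-vector $\xi$, and since any two smooth Hermitian metrics on $\O(-1)$ are mutually bounded on the compact space $\Grass(k, TX)$, this is precisely the definition of $k$-infinitesimal hyperbolicity.

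The main obstacle is securing the uniform inequality $\mathrm{Ric}(\beta)^{k} \geq A\, \beta$: this requires converting a $(1,1)$-form curvature positivity upstairs on the total space $\Grass(k, TX)$ into a $(k,k)$-form volume positivity downstairs on $V \subset \mathbb{B}_{k}$, uniformly over all test maps $f$. It is handled by a pointwise linear-algebra inequality relating, for a positive $(1,1)$-form $\alpha$ on $T_{(x, W)}\Grass(k, TX)$, the restriction $\alpha^{k}|_{d\tilde f_{z}(T_{z}V)}$ to the volume induced by $h$ on the line $\det W$; compactness of $\Grass(k, TX)$ then makes the relevant constants uniform in $(x, W)$, which is the technical input that powers the whole argument.
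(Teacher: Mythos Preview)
The paper does not prove this proposition: it is quoted without proof from \cite{Santa} (as a particular case of Prop.~3.4 there) and used as a black box to derive the hyperbolicity corollary. Your argument is the standard Kobayashi--Ahlfors--Schwarz method that underlies that reference, and it is correct in outline.

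The one point that deserves a sharper justification is exactly the one you flag at the end. The family of $k$-planes $d\tilde f_z(T_zV)\subset T_{(x,W)}\Grass(k,TX)$ that project isomorphically onto $W$ via $d\pi$ is \emph{not} compact: the vertical component (the second-order data of $f$) is unconstrained. Hence a direct compactness argument on the ratio $(\tilde f^{*}i\Theta_h)^{k}/\beta$ does not immediately apply. The fix is to first minimise $i\Theta_h(\xi,\bar\xi)$ over all lifts $\xi$ of a given $v\in W$; since $i\Theta_h$ is positive definite, this infimum is attained and defines a positive definite Hermitian form on $W$ varying continuously with $(x,W)$. Only then does compactness of $\Grass(k,TX)$ yield a uniform lower bound for that minimised form against the $h$-metric on $\det W$, which is the constant $A$ you need. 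With this refinement your argument is complete.
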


This proposition combined with Theorem~\ref{thm: main theorem2} gives the following.
\begin{corollary}
  Let \(X=H_{1}\cap \dotsb \cap H_{c}\) be a generic complete intersection in \(\P^{N}\) of codimension \(c \geq \frac{N}{k+1}\), with \(H_{i} \in \O_{\P^{N}}(d_{i})\) of degree \(d_{i} \geq \big(1+2c(k+1)(N+k(N-k))^{c(k+1)+1}\big)^{2}\). Then \(X\) is \(k\)-infinitesimally hyperbolic.
\end{corollary}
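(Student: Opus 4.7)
The statement follows by chaining two results already established in the excerpt, so the proof plan is short but one should check that the numerical bound matches.

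First I would specialize the Corollary~\ref{cor: main corollary} to the partition $\lambda=(1^{k})$. For this partition, the jump sequence is $s=(k)$, the Schur bundle $S^{\lambda}\Omega_{X}$ is exactly $\bigwedge^{k}\Omega_{X}$, and the conjugate partition is $\lambda^{*}=(k)$, so $\lambda_{+}^{*}=(k+1)$ and $|\lambda_{+}^{*}|=k+1$. Substituting into the degree bound in Corollary~\ref{cor: main corollary} yields precisely
\[
d_{i}\geq \bigl(1+2c(k+1)\bigl(N+k(N-k)\bigr)^{c(k+1)+1}\bigr)^{2},
\]
which is the hypothesis of the statement. Applying the corollary, a generic complete intersection $X=H_{1}\cap\dotsb\cap H_{c}$ with these degrees has $\bigwedge^{k}\Omega_{X}$ ample.

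Next, I would invoke the proposition from~\cite{Santa} recalled just above the statement: on a compact complex manifold, ampleness of $\bigwedge^{k}\Omega_{X}$ implies $k$-infinitesimal hyperbolicity. Since $X$ is a smooth projective (hence compact complex) variety of the required dimension, this immediately gives the conclusion.

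There is really no main obstacle here: all the substantial work has been done in building Corollary~\ref{cor: main corollary}, and the quoted proposition handles the transfer from ampleness to hyperbolicity. The only point requiring a moment of care is the bookkeeping of $|\lambda_{+}^{*}|$ to verify that the explicit degree bound in the statement matches the specialization of the general bound to $\lambda=(1^{k})$; once this is observed, the proof is a two-line concatenation.
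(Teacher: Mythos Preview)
Your proposal is correct and mirrors the paper's own argument exactly: the paper simply states that the corollary follows by combining the proposition from~\cite{Santa} with Corollary~\ref{cor: main corollary}, and the specialization $\lambda=(1^{k})$ with $|\lambda_{+}^{*}|=k+1$ is spelled out at the start of Section~\ref{se: hyperbolicity}. There is nothing to add.
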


The Kobayashi conjecture for general hypersurfaces in projective spaces was recently proved by Brotbek in~\cite{brotbek2017hyperbolicity}: a general hypersurface in \(\P^{N}\) of degree greater than a bound \(d_{N}=(N+1)^{2N+6}\) is hyperbolic. Let us mention here that, as hyperbolic implies \(k\)-infinitesimally hyperbolic for \(1\leq k \leq N\), the previous result of Brotbek implies in particular this corollary (bounds included). However, using Theorem~\ref{thm: main theorem} instead of Theorem~\ref{thm: main theorem2}, it is possible to construct particular examples that are not covered by Brotbek's theorem. For instance, a generic complete intersection \(H_{1}\cap H_{2}\) of codimension 2, with \(H_{i} \in H^{0}(\P^{N}, \O_{\P^{N}}(d))\), \(d \geq d_{N}'\), where \(d_{N}'=4(\lceil \frac{N}{2} -1 \rceil+1)\big(N+\lceil \frac{N}{2} -1 \rceil(N-\lceil \frac{N}{2} -1 \rceil)\big)^{N+1}\), is \(\big(N-\lceil \frac{N}{2} -1 \rceil\big)\)-infinitesimally hyperbolic. Note that \(d_{N}' \leq 2(N+1)(\frac{N-2}{2})^{2N+2}\) is indeed a better bound than \(d_{N}\) for large \(N\).

\bibliographystyle{alpha}
\bibliography{cotangent}

\end{document}